\newcommand{\ca}[1]{\mathcal{#1}}
\newcommand{\bb}[1]{\mathbb{#1}}
\newcommand{\argmin}{\mathop{\rm arg\,min}}
\newcommand{\leqnomode}{\tagsleft@true}
\newcommand{\reqnomode}{\tagsleft@false}
\newtheorem{theorem}{Theorem}[section]
\newtheorem{corollary}{Corollary}[theorem]
\newtheorem{lemma}[theorem]{Lemma}
\newtheorem{proposition}[theorem]{Proposition}
\newtheorem{definition}[theorem]{Definition}
\newtheorem{remark}[theorem]{Remark}
\tikzset{%
    add/.style args={#1 and #2}{
        to path={%
 ($(\tikztostart)!-#1!(\tikztotarget)$)--($(\tikztotarget)!-#2!(\tikztostart)$)%
  \tikztonodes},add/.default={.2 and .2}}
}  
\begin{document}
\title{A Note on Nesterov's Accelerated Method in Nonconvex Optimization: a Weak Estimate Sequence Approach}
\author{Jingjing Bu and Mehran Mesbahi\thanks{The authors are with the University of Washington, Seattle; Emails:{\tt \{bu+mesbahi\}@uw.edu}}}
\date{}
\maketitle
\begin{abstract}
  We present a variant of accelerated gradient descent algorithms, adapted from Nesterov's optimal first-order methods, for weakly-quasi-convex and weakly-quasi-strongly-convex functions. We show that by tweaking the so-called estimate sequence method, the derived algorithm achieves optimal convergence rate for weakly-quasi-convex and weakly-quasi-strongly-convex in terms of oracle complexity. In particular, for a weakly-quasi-convex function with Lipschitz continuous gradient, we require $O(\frac{1}{\sqrt{\varepsilon}})$ iterations to acquire an $\varepsilon$-solution; for weakly-quasi-strongly-convex functions, the iteration complexity is $O\left( \ln\left(\frac{1}{\varepsilon}\right) \right)$. Furthermore, we discuss the implications of these algorithms for linear quadratic optimal control problem.
\end{abstract}
{\bf Keywords:} acceleration; weakly-quasi-convex optimization, linear quadratic control
\section{Introduction}
\label{sec:intro}
Nesterov's seminal work~\cite{nesterov1983method} on \emph{Accelerated Gradient Descent}
has had profound implications on the design of first-order methods for large-scale convex optimization.
%
In recent years, applications in machine learning and statistics have lead
to an increased emphasis on large-scale \emph{nonconvex} problems. In the nonconvex realm, it is difficult to ensure global convergence of accelerated methods. Nesterov {\em et al.} in a recent work~\cite{nesterov2019primal} proposes a primal-dual acceleration scheme that works for both convex and nonconvex objectives. As a particular case, it was proven that the new scheme proposed~\cite{nesterov2019primal} has almost optimal convergence rate when applied to $\gamma$-weakly-quasi-convex functions. The notion of \emph{weakly-quasi-convex} is introduced by Hardt {\em et al.} in~\cite{hardt2016gradient} in order to guarantee the convergence of stochastic gradient descent in learning dynamical systems. Guminov {\em et al.} in~\cite{guminov2017accelerated} further analyzed convergence rate of first-order algorithm, i.e., gradient descent and subspace optimization method~\cite{narkiss2005sequential} for this class of functions. Right after the completion of this work, we became
aware of~\cite{hinder2019} on optimal methods for ``quasar-convex'' functions that are also the focus of this work.
Our approach to devising acceleration
for this class of functions is, in the meantime, 
distinct from that adopted in~\cite{hinder2019} and relies on the estimate sequence machinery~\cite{nesterov2013introductory}.
 Our work is partially motivated by a class of problems in control and machine learning in the context of data-driven decision-making. A typical scenario is to use observed data to directly synthesize controllers for a dynamical systems. Linear-Quadratic-Regulator (LQR) is one popular paradigm for such control synthesis. As subsequently pointed out in this paper,  the LQR cost as a function of the feedback gain is indeed $\gamma$-weakly-quasi-convex; see \S\ref{LQR}. \par
In this manuscript, we focus on designing accelerated gradient methods~\cite{nesterov2013introductory} for weakly-quasi-convex functions. Indeed, we introduce a notion of $(\gamma, \mu)$-weakly-quasi-strongly-convex functions, which subsumes the function class of weakly-quasi-convex functions;\footnote{When $\mu=0$, then this function class is exactly the class of \emph{weakly-quasi-convex} functions.} we then proceed to design a generative accelerated gradient descent framework for this function class. Furthermore, we show that the proposed algorithms achieve optimal oracle complexity for weakly-quasi-convex and weakly-quasi-strongly-convex functions.
 \section{Preliminaries}
 \label{sec:prelim}
 Consider the unconstrained minimization problem,
 \begin{align*}
   \min_{x \in \bb R^n} f(x),
   \end{align*}
   where $f: \bb R^n \to \bb R$ is $L$-smooth (not necessarily convex) and \emph{bounded below}. We shall be mainly concerned with class of weakly-quasi-convex functions and its stronger variants.\footnote{By ``stronger'' we mean functions that enjoy more regularity in addition to being weakly-quasi-convex.} We note that these functions have a global minimum. Denote the (nonempty) set of minimizers by $\chi = \argmin_{x \in \bb R^n} f(x)$.\par
   We next recall several definitions.
 \begin{definition}[\cite{hardt2016gradient}]
  The function $f$ is $\gamma$-weakly-quasi-convex if
\begin{align*}
  \gamma [f(x) - f(x^*)] \le \langle \nabla f(x), x-x^* \rangle
\end{align*}
for some $\gamma \in (0, 1]$. 
\end{definition}
Note that if $\gamma = 1$, the above definition coincides with \emph{weak convexity} for $C^1$ functions.
\begin{definition}
  The function $f$ satisfies the quadratic growth condition if
  \begin{align*}
    f(x) - f(x^*) \ge \frac{1}{2} \zeta  \langle \nabla f(x), \nabla f(x)\rangle,
    \end{align*}
    where $\zeta >0$ is some fixed constant.
\end{definition}
Generally, in order to ensure \emph{linear convergence} of a ``vanilla'' gradient descent algorithm, stronger assumptions on the function class are needed. Next, we introduce two assumptions that we shall subsequently see have implications for the linear convergence rate of gradient descent. 
\begin{definition}
 The function $f$ is $(\gamma, \mu)$-weakly-quasi-strongly-convex if
\begin{align*}
  f(x) -f(x^*) \le \frac{1}{\gamma} \langle \nabla f(x), x-x^*\rangle - \frac{\mu}{2} \|x-x^*\|^2.
\end{align*}
\end{definition}
This definition can be considered as a relaxed weak-strong-convexity in the nonconvex setting. In a sense, it resembles strong convexity: rewriting the above inequality yields,
\begin{align*}
  f(x^*) \ge f(x) + \frac{1}{\gamma} \langle \nabla f(x), x^* - x \rangle + \frac{\mu}{2} \|x-x^*\|^2.
\end{align*}
We shall denote the class of $L$-smooth and $(\gamma, \mu)$-weakly-quasi-strongly-convex functions by $\ca W_{L, \gamma, \mu}$. \par
Next, we consider the function class $\ca{WQ}_{L,\gamma, \mu}$ consisting of $\gamma$-weakly-quasi-convex functions satisfying the quadratic growth condition with constant $\mu$.
\begin{definition}
  The function $f \in \ca {WQ}_{\gamma, \mu}(\bb R^n)$ if
  \begin{align*}
    \frac{\mu}{2}\|x-x^*\|^2 \le f(x)-f(x^*) \le \frac{1}{\gamma} \langle \nabla f(x), x-x^*\rangle.
    \end{align*}
  \end{definition}
  Finally, we recall the definition of gradient dominated functions.
\begin{definition}[\cite{polyak1963gradient}]
  The function $f$ is gradient dominated with constant $\tau$ if
  \begin{align*}
    \tau \left[f(x) - f(x^*) \right]\le \frac{1}{2} \langle \nabla f(x), \nabla f(x)\rangle.
    \end{align*}
\end{definition}
\section{Relationships between function classes}
For various notions of regularity and how they are related, we refer to the work by Karimi {\em et al.}~\cite{karimi2016linear}. In this section, we examine the function classes $\ca{W}_{L, \gamma, \mu}$ and $\ca{WQ}_{L, \gamma, \mu}$. We first note that $\ca{WQ}_{L, \gamma, \mu} \subseteq \ca{W}_{L, \frac{1}{1/\gamma + a/(\mu \gamma)}, a}$ for any $a > 0$.
\begin{proposition}
  \label{prop:wq2w}
  For every $a > 0$, if $f \in \ca{WQ}_{L, \gamma, \mu}$, then $f \in \ca{W}_{L, \frac{1}{1/\gamma +a/(\mu \gamma)}, a}$, i.e.,
  \begin{align*}
    f(x) - f(x^*) \le \left( \frac{1}{\gamma} + \frac{a}{\mu \gamma} \right) \langle \nabla f(x), x-x^* \rangle - \frac{a}{2}\|x-x^*\|^2.
    \end{align*}
  \end{proposition}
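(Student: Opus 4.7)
The plan is to work directly from the two defining inequalities of the class $\ca{WQ}_{L,\gamma,\mu}$ and show that the target inequality is a convex combination/sum of them with nonnegative weights. Introduce the shorthand $D := f(x) - f(x^*)$, $G := \langle \nabla f(x), x - x^* \rangle$, and $R^2 := \|x - x^*\|^2$, so that the hypothesis reads
\[
\frac{\mu}{2} R^2 \;\le\; D \;\le\; \frac{G}{\gamma},
\]
and the goal is
\[
D \;\le\; \Bigl( \tfrac{1}{\gamma} + \tfrac{a}{\mu\gamma} \Bigr) G - \tfrac{a}{2} R^2.
\]

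First I would chain the two hypotheses to deduce that $\frac{\mu}{2} R^2 \le \frac{G}{\gamma}$, i.e.\ $G \ge \frac{\mu\gamma}{2} R^2$. Multiplying this by the positive quantity $\frac{a}{\mu\gamma}$ yields the nonnegative auxiliary estimate
\[
\frac{a}{\mu\gamma} G - \frac{a}{2} R^2 \;\ge\; 0.
\]
This captures exactly the ``$-\frac{a}{2}\|x-x^*\|^2$'' slack that appears on the right-hand side of the conclusion, and it is the only spot where the quadratic growth hypothesis gets used.

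Next I would simply add this nonnegative quantity to the weakly-quasi-convex inequality $D \le \frac{G}{\gamma}$:
\[
D \;\le\; \frac{G}{\gamma} + \Bigl(\frac{a}{\mu\gamma}G - \frac{a}{2}R^2\Bigr) \;=\; \Bigl(\frac{1}{\gamma} + \frac{a}{\mu\gamma}\Bigr) G - \frac{a}{2} R^2,
\]
which is precisely the claimed membership of $f$ in $\ca{W}_{L, \frac{1}{1/\gamma + a/(\mu\gamma)}, a}$ after unpacking the definition. The only thing to notice for the ``$L$-smooth'' part is that smoothness is preserved trivially since the function itself is unchanged.

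There is no genuine obstacle here: the proof is a one-line combination of the two defining bounds, with the positive parameter $a$ acting as a tuning weight between the quadratic growth and weakly-quasi-convex inequalities. The only bookkeeping worth double-checking is the algebraic identity $\frac{1}{1/\gamma + a/(\mu\gamma)} \cdot \bigl(\frac{1}{\gamma} + \frac{a}{\mu\gamma}\bigr) = 1$, so that the coefficient on the inner product matches the form required by the definition of $\ca{W}_{L,\cdot,\cdot}$.
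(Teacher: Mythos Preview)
Your proposal is correct and follows essentially the same approach as the paper: both derive the auxiliary bound $\frac{1}{2}\|x-x^*\|^2 \le \frac{1}{\gamma\mu}\langle \nabla f(x), x-x^*\rangle$ by chaining the two defining inequalities, and then add the resulting nonnegative slack $\frac{a}{\mu\gamma}G - \frac{a}{2}R^2 \ge 0$ to the weakly-quasi-convex inequality $D \le G/\gamma$. The only difference is cosmetic---the paper writes the argument as ``add and subtract $\tfrac{a}{2}\|x-x^*\|^2$'' while you phrase it as adding a nonnegative quantity.
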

  \begin{proof}
    It suffices to observe that if $f \in \ca{WQ}_{L, \gamma, \mu}$, then
    \begin{align*}
      \frac{1}{2}\|x-x^*\|^2 \le \frac{1}{\gamma \mu} \langle \nabla f(x), x-x^*\rangle.
      \end{align*}
     As such,
      \begin{align*}
        f(x) -f(x^*) &\le \frac{1}{\gamma}\langle \nabla f(x), x-x^*  \rangle + \frac{a}{2}\|x-x^*\|^2 - \frac{a}{2}\|x-x^*\|^2 \\
                     &\le \left( \frac{1}{\gamma} + \frac{a}{\gamma \mu}\right) \langle \nabla f(x), x-x^* \rangle - \frac{a}{2}\|x-x^*\|^2.
        \end{align*}
    \end{proof}
We next observe that if $f \in \ca W_{L, \gamma, \mu}$, then $f$ is gradient dominated.
\begin{lemma}
    \label{lemma:wqsc_gdom}
    If $f \in \ca W_{L, \gamma, \mu}$, then
    \begin{align*}
      \mu \gamma^2 [f(x) - f(x^*)] \le \frac{1}{2}\langle \nabla f(x), \nabla f(x)\rangle.
    \end{align*}
    \end{lemma}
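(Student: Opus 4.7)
The plan is to start from the defining inequality of $\ca W_{L,\gamma,\mu}$,
\begin{align*}
f(x) - f(x^*) \;\le\; \tfrac{1}{\gamma}\langle \nabla f(x),\, x - x^*\rangle - \tfrac{\mu}{2}\|x-x^*\|^2,
\end{align*}
and upper-bound the inner product term by the Cauchy--Schwarz inequality, giving
\begin{align*}
f(x) - f(x^*) \;\le\; \tfrac{1}{\gamma}\|\nabla f(x)\|\,\|x-x^*\| - \tfrac{\mu}{2}\|x-x^*\|^2.
\end{align*}

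The right-hand side is a concave quadratic in the scalar quantity $t := \|x-x^*\|$; I would then maximize over $t \ge 0$ (equivalently, complete the square, or apply the elementary inequality $\alpha t - \tfrac{\beta}{2} t^2 \le \tfrac{\alpha^2}{2\beta}$ with $\alpha = \tfrac{1}{\gamma}\|\nabla f(x)\|$ and $\beta = \mu$). The maximum is attained at $t^\star = \|\nabla f(x)\|/(\gamma\mu)$ and equals $\|\nabla f(x)\|^2/(2\gamma^2 \mu)$, yielding
\begin{align*}
f(x) - f(x^*) \;\le\; \frac{\|\nabla f(x)\|^2}{2\gamma^2 \mu}.
\end{align*}
Multiplying both sides by $\mu\gamma^2 > 0$ gives the claimed inequality.

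There is no real obstacle here: the entire argument is Cauchy--Schwarz followed by a one-variable quadratic optimization. The only thing worth flagging is that the weighted completion of squares must use the given coefficients $1/\gamma$ and $\mu/2$ exactly, so that the constant $\mu\gamma^2$ emerges in the correct place; any miscounting of the factors would shift the gradient-domination constant. Otherwise the proof is two lines.
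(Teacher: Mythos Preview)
Your proof is correct and essentially the same as the paper's: the paper applies Young's inequality $\langle \nabla f(x), x-x^*\rangle \le \tfrac{\rho}{2}\|\nabla f(x)\|^2 + \tfrac{1}{2\rho}\|x-x^*\|^2$ and then chooses $\rho = 1/(\gamma\mu)$ to cancel the $\|x-x^*\|^2$ term, which is precisely your Cauchy--Schwarz-plus-quadratic-maximization step written in a different order. The two arguments are the same completion of squares, and your constant tracking is correct.
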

    \begin{proof}
      If $f \in \ca W_{L, \gamma, \mu}$, then
      \begin{align*}
        f(x) -f(x^*) \le \frac{1}{\gamma} \langle \nabla f(x), x-x^*\rangle - \frac{\mu}{2} \|x-x^*\|^2.
    \end{align*}
    Noting that for every $\rho > 0$,
\begin{align*}
  \langle \sqrt{\frac{\rho}{2}} \nabla f(x) - \sqrt{\frac{1}{2\rho}} (x-x^*), \sqrt{\frac{\rho}{2}} \nabla f(x) - \sqrt{\frac{1}{2\rho}} (x-x^*) \rangle \ge 0,
  \end{align*}
  it follows that,
    \begin{align*}
      \langle \nabla f(x), x-x^*\rangle \le \frac{\rho}{2}\langle  \nabla f(x), \nabla f(x)\rangle + \frac{1}{2\rho} \langle x-x^*, x-x^*\rangle.
      \end{align*}
      Choosing $\rho$ such that $\frac{1}{2\rho \gamma}=\frac{\mu}{2}$, i.e., $\rho = \frac{1}{\gamma \mu}$, we have
      \begin{align*}
        f(x) - f(x^*) \le \frac{1}{2\mu \gamma^2} \langle \nabla f(x), \nabla f(x)\rangle.
        \end{align*}
    \end{proof}
    The following result is now immediate.
    \begin{corollary}
      If $f \in \ca{W}_{L, \gamma, \mu}$, then $f \in \ca{WQ}_{L, \gamma, 4\mu\gamma^2}$.
      \end{corollary}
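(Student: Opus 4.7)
The plan is to verify both inequalities defining membership in $\ca{WQ}_{L,\gamma,4\mu\gamma^2}$, treating Lemma~\ref{lemma:wqsc_gdom} as the crucial extra ingredient beyond the $\ca{W}_{L,\gamma,\mu}$-inequality itself. The upper inequality $f(x)-f(x^*) \le \tfrac{1}{\gamma}\langle\nabla f(x), x-x^*\rangle$ is immediate from the defining inequality of $\ca{W}_{L,\gamma,\mu}$: dropping the nonnegative term $\tfrac{\mu}{2}\|x-x^*\|^2$ on its right-hand side leaves exactly $\gamma$-weakly-quasi-convexity. So the work concentrates on the quadratic-growth lower bound $2\mu\gamma^2 \|x-x^*\|^2 \le f(x) - f(x^*)$.

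For the lower bound, the plan is to combine two consequences of $f \in \ca{W}_{L,\gamma,\mu}$. First, using $f(x)-f(x^*) \ge 0$ in the $\ca{W}$-inequality and then Cauchy--Schwarz yields $\|x-x^*\| \le \tfrac{2}{\mu\gamma}\|\nabla f(x)\|$; squaring this produces the factor $4/(\mu^2\gamma^2)$ responsible for the $4$ appearing in the target constant. Second, Lemma~\ref{lemma:wqsc_gdom} supplies the gradient-dominance inequality $\|\nabla f(x)\|^2 \ge 2\mu\gamma^2 (f(x)-f(x^*))$. Chaining these, and bringing in the standard $L$-smoothness estimate $\|\nabla f(x)\|^2 \le 2L(f(x)-f(x^*))$ where needed to align the inequality directions, should assemble the desired growth bound.

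The delicate step I anticipate is tracking the absolute constant so that $4\mu\gamma^2$ comes out cleanly. A naive combination of the three inequalities above yields a quadratic-growth coefficient of order $\mu^2\gamma^2/L$, while a gradient-flow argument driven purely by Lemma~\ref{lemma:wqsc_gdom} (whose PL constant is $\mu\gamma^2$) would give $\mu\gamma^2$. Matching the advertised $4\mu\gamma^2$ presumably requires a specific Young-type step that trades tightness for a clean numerical factor; that the Corollary is announced as ``immediate'' suggests the authors intend a short computation of this flavour, and in any case the overall structure---drop a term for $\gamma$-weakly-quasi-convexity, invoke Lemma~\ref{lemma:wqsc_gdom} for the growth bound---is forced.
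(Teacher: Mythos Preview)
Your overall structure is right---the upper inequality is immediate by dropping $\frac{\mu}{2}\|x-x^*\|^2$, and the lower inequality should flow from Lemma~\ref{lemma:wqsc_gdom}---but the concrete plan for the lower bound does not close. Chaining $\|x-x^*\| \le \frac{2}{\mu\gamma}\|\nabla f(x)\|$ (from Cauchy--Schwarz applied to the $\ca W$-inequality) with the $L$-smoothness bound $\|\nabla f(x)\|^2 \le 2L(f(x)-f^*)$ yields only $f(x)-f^* \ge \frac{\mu^2\gamma^2}{8L}\|x-x^*\|^2$, exactly the suboptimal $\mu^2\gamma^2/L$ rate you flag; Lemma~\ref{lemma:wqsc_gdom} cannot be inserted into that chain because its inequality points the wrong direction, and no Young-type rebalancing of these three pieces removes the extraneous $\mu/L$ factor.

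The paper's argument is precisely the route you mention in passing and then set aside: Lemma~\ref{lemma:wqsc_gdom} shows $f$ is gradient dominated with constant $\mu\gamma^2$, and the paper then simply invokes Theorem~1 of~\cite{karimi2016linear} (the standard ``PL implies quadratic growth'' implication) to conclude $f(x)-f(x^*) \ge 2\mu\gamma^2\|x-x^*\|^2$. That is the whole proof. The factor~$4$ in the corollary's constant is inherited from the version of PL\,$\Rightarrow$\,QG being cited, not from any additional manipulation, so your difficulty reproducing it by elementary inequalities is not a defect in your reasoning. Drop the $L$-smoothness detour and quote PL\,$\Rightarrow$\,QG directly after Lemma~\ref{lemma:wqsc_gdom}.
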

      \begin{proof}
        It suffices to observe that by Theorem $1$ in~\cite{karimi2016linear}, $f$ satisfies \emph{quadratic growth} condition,
        \begin{align*}
          f(x) - f(x^*) \ge 2 \mu \gamma^2 \|x-x^*\|^2.
          \end{align*}
        \end{proof}
\section{Convergence rate of gradient descent for function classes $\ca W_{L, \gamma, \mu}$ and $\ca {WQ}_{L, \gamma, \mu}$}
In this section, we examine the convergence rates of gradient descent for $W_{L, \gamma, \mu}$ and $\ca{WQ}_{L, \gamma, \mu}$, i.e., 
\begin{align}
  \label{eq:gd}
  x_{k+1} = x_k - h_k \nabla f(x_k),
  \end{align}
  where $h_k$ is the stepsize.
The case where $\mu = 0$ has been examined in~\cite{guminov2017accelerated}. 
Note that in this case, $\ca{W}_{L, \gamma, 0} \equiv \ca{WQ}_{L,\gamma, 0}$.
\begin{theorem}[Theorem $1$ in~\cite{guminov2017accelerated}]
  If $f \in \ca W_{L, \gamma, 0}$, then the sequence $\{x_k\}$ generated by gradient descent with stepsize $1/L$, i.e.,
  \begin{align*}
    x_{k}  = x_{k-1} - \frac{1}{L} \nabla f(x_{k-1}),
    \end{align*}
    satisfies
    \begin{align*}
      f(x_k) - f^* \le \frac{L \|x_0 - x^*\|^2}{ \gamma (k+1)}.
      \end{align*}
\end{theorem}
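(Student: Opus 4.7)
The plan is to combine the standard ``descent lemma'' from $L$-smoothness with the $\gamma$-weakly-quasi-convex inequality applied at $x_k$, then sum a one-step recursion in the two potentials $r_k := \|x_k - x^*\|^2$ and $\delta_k := f(x_k)-f^*$.

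First, from $L$-smoothness and the step size $1/L$, I would derive the standard descent inequality $f(x_{k+1}) \le f(x_k) - \tfrac{1}{2L}\|\nabla f(x_k)\|^2$, which both guarantees monotonicity $\delta_{k+1}\le\delta_k$ and yields $\tfrac{1}{2L}\|\nabla f(x_k)\|^2 \le \delta_k - \delta_{k+1}$. Next, expanding the update $x_{k+1}=x_k-\tfrac{1}{L}\nabla f(x_k)$ gives
\begin{align*}
\tfrac{L}{2}(r_k - r_{k+1}) = \langle \nabla f(x_k),\, x_k - x^*\rangle - \tfrac{1}{2L}\|\nabla f(x_k)\|^2.
\end{align*}
Using the $\gamma$-weakly-quasi-convex bound $\gamma\delta_k \le \langle \nabla f(x_k), x_k-x^*\rangle$ on the right and substituting the descent bound for the gradient-squared term, I obtain the one-step recursion
\begin{align*}
\gamma\delta_k + \delta_{k+1} \le \tfrac{L}{2}(r_k - r_{k+1}) + \delta_k.
\end{align*}

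Second, I would telescope from $k=0$ to $k=N-1$. The left-hand sum collapses to $\gamma\sum_{k=0}^{N-1}\delta_k + \delta_N - \delta_0$, while the right telescopes to $\tfrac{L}{2}(r_0-r_N)+\sum_{k=0}^{N-1}\delta_k$ minus nothing extra, yielding after rearrangement
\begin{align*}
\gamma\sum_{k=0}^{N-1}\delta_k + \delta_N \le \tfrac{L}{2}r_0 + \delta_0.
\end{align*}
Because $\nabla f(x^*)=0$ at a minimizer, $L$-smoothness gives $\delta_0 \le \tfrac{L}{2}r_0$, so the right-hand side is at most $Lr_0$.

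Finally, I would use the monotonicity $\delta_N\le\delta_k$ for all $k\le N$ to lower bound the left side. Since $\sum_{k=0}^{N-1}\delta_k \ge N\delta_N$ and $\gamma\in(0,1]$ implies $\delta_N \ge \gamma\delta_N$, the left side is at least $\gamma N\delta_N + \gamma\delta_N = \gamma(N+1)\delta_N$, which gives the advertised bound $\delta_N \le \tfrac{Lr_0}{\gamma(N+1)}$. The only slightly delicate step is arranging the telescoping so that both $\delta_N$ and $\gamma\sum\delta_k$ appear with compatible coefficients; once that is in place, the factor $(N+1)$ rather than $N$ in the denominator falls out cleanly from adding the tail term $\delta_N$ to the sum before invoking monotonicity.
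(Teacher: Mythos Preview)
The paper does not provide its own proof of this statement; it is quoted verbatim as Theorem~1 of~\cite{guminov2017accelerated} and simply cited without argument. So there is nothing in the paper to compare your proposal against.

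That said, your argument is correct and is the standard route for this kind of result. The expansion of $\tfrac{L}{2}(r_k-r_{k+1})$, the substitution of the weak-quasi-convexity lower bound and the descent-lemma upper bound, the telescoping, and the use of monotonicity to extract the factor $\gamma(N+1)$ all check out. The only mild assumption you invoke beyond the theorem's hypotheses is $\nabla f(x^*)=0$ (to get $\delta_0\le \tfrac{L}{2}r_0$), which is justified since $f$ is assumed $L$-smooth and $x^*$ is a global minimizer.
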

We shall next establish that when $\mu > 0$, the convergence rate of gradient descent~\eqref{eq:gd} is linear.
\begin{lemma}
  \label{lemma:convergence_wqsc}
  If $f \in \ca W_{L, \gamma, \mu}$, then the sequence $\{x_k\}$ generated by gradient descent with stepsize $\gamma/L$, i.e.,
\begin{align*}
  x_{k}= x_{k-1} - \frac{\gamma}{L} \nabla f(x_{k-1}),
  \end{align*}
  satisfies
  \begin{align*}
    \|x_{k+1} - x^*\|^2 \le \left( 1 - \frac{\gamma ^2 \mu }{L} \right)^{k+1} \|x_0 - x^*\|^2.
    \end{align*}
  \end{lemma}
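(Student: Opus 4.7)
My plan is to establish the contraction per step, i.e.\ $\|x_{k+1} - x^*\|^2 \le (1 - \gamma^2 \mu/L) \|x_k - x^*\|^2$, and then iterate. The starting point is the standard expansion
\begin{align*}
\|x_{k+1} - x^*\|^2 = \|x_k - x^*\|^2 - \tfrac{2\gamma}{L} \langle \nabla f(x_k), x_k - x^* \rangle + \tfrac{\gamma^2}{L^2} \|\nabla f(x_k)\|^2,
\end{align*}
after which the task reduces to bounding the cross-term from below and the squared-gradient term from above.

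For the cross term, I invoke the defining inequality of $\ca W_{L, \gamma, \mu}$ rearranged as
\begin{align*}
\langle \nabla f(x_k), x_k - x^* \rangle \ge \gamma\bigl[ f(x_k) - f(x^*)\bigr] + \tfrac{\gamma \mu}{2} \|x_k - x^*\|^2,
\end{align*}
which contributes both a $-(\gamma^2 \mu/L)\|x_k - x^*\|^2$ contraction factor and a $-(2\gamma^2/L)[f(x_k)-f(x^*)]$ term. For the squared gradient, I use $L$-smoothness: applying the descent lemma to the point $x_k - (1/L)\nabla f(x_k)$ and comparing with $f(x^*)$ gives $\|\nabla f(x_k)\|^2 \le 2L[f(x_k) - f(x^*)]$. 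Substituting yields $(\gamma^2/L^2)\|\nabla f(x_k)\|^2 \le (2\gamma^2/L)[f(x_k) - f(x^*)]$, which exactly cancels the unwanted $f(x_k) - f(x^*)$ term from the cross-term bound. What remains is precisely $(1 - \gamma^2 \mu/L)\|x_k - x^*\|^2$.

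Iterating the one-step contraction $k+1$ times then produces the claimed bound. I don't anticipate a genuine obstacle — the only mild subtlety is recognizing that the stepsize $\gamma/L$ (rather than $1/L$) is chosen precisely so that the smoothness-based upper bound on $\|\nabla f(x_k)\|^2$ exactly offsets the $f(x_k) - f(x^*)$ contribution from the weak-quasi-strong-convexity inequality; any other choice would leave a stray $f$-gap term that one would then need to sign, complicating the clean geometric decay in $\|x_k - x^*\|^2$.
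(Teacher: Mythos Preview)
Your proof is correct and is essentially identical to the paper's argument. The paper uses the same expansion of $\|x_{k+1}-x^*\|^2$, invokes the same two ingredients (the $\ca W_{L,\gamma,\mu}$ inequality and the $L$-smoothness bound $f(x_k)-f^*\ge \tfrac{1}{2L}\|\nabla f(x_k)\|^2$), and arrives at the same one-step contraction; the only cosmetic difference is that the paper first merges those two inequalities into a single lower bound on $\langle\nabla f(x_k),x_k-x^*\rangle$ before substituting, whereas you substitute them separately and observe the cancellation.
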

  \begin{proof}
    Putting $r_k \coloneqq \|x_k - x^*\|$, we have
    \begin{align}
      \label{lemma2_myeq1}
      \begin{split}
      r_{k+1}^2 &= \norm{ x_{k} - \frac{\gamma}{L} \nabla f(x_k) - x^*}^2 \\
                &=r_k^2 - \frac{2 \gamma }{L} \langle \nabla f(x_k), x_k - x^* \rangle + \frac{\gamma^2 }{L^2} \|\nabla f(x_k)\|^2.
                \end{split}
      \end{align}
      We note that,
      \begin{align}
        \begin{split}
        \frac{1}{\gamma} \langle \nabla f(x_k), x^* - x_k \rangle &\le  f^* - f(x_k) - \frac{\mu}{2} \|x_k - x^*\|^2 \\
                                                                  &\le  -\frac{1}{2L} \|\nabla f(x_k)\|^2 - \frac{\mu}{2} \|x_k - x^* \|^2.
                                                                    \end{split}\label{lemma2_myeq2}
        \end{align}
        Substituting~\eqref{lemma2_myeq2} into~\eqref{lemma2_myeq1} completes the proof.
    \end{proof}
We now show that the proposed algorithms achieve optimal convergence rate for weakly-quasi-convex, weakly-quasi-strongly-convex, and gradient dominated functions in terms of the respective oracle models. 
    \begin{remark}
      As we have established in Lemma~\ref{lemma:wqsc_gdom} that if $f \in \ca W_{L, \gamma, \mu}$, then $f \in \ca{GD}_{L, {\mu}\gamma^2}$. It is possible to follow a similar procedure in~\cite{polyak1963gradient} Theorem $4$ to arrive at the convergence rate,
      \begin{align*}
            f(x_{k+1}) - f(x^*) \le \left( 1 - \frac{\mu \gamma^2 }{L }\right)^{k+1} (f(x_0)-f(x^*)),
        \end{align*}
        with constant stepsize $1/L$. It appears that the direct argument in Lemma~\ref{lemma:convergence_wqsc} proves a somewhat stronger convergence result; namely, the iterates converge at the same linear rate.
      \end{remark}
        By Proposition~\ref{prop:wq2w}, it is straightforward to conclude that the convergence rate for the function class $\ca{WQ}_{L, \gamma, \mu}$ is given by,
        \begin{align*}
          \|x_{k+1} - x^*\|^2 \le \left( 1 - \frac{\gamma^2 \mu}{L} \frac{a/\mu}{(1+a/\mu)^2}\right)^{k+1} \|x_0 -x^*\|^2.
        \end{align*}
                        Maximizing the quantity
                        $$\frac{a/\mu}{(1+a/\mu)^2}$$
                        over $a \in (0, \infty)$, we obtain $a = \mu$. The convergence rate gradient descent on $\ca {WQ}_{L, \gamma, \mu}(\bb R^n)$ is now summarized as follows.
      \begin{lemma}
        \label{lemma:wq_gd_rate}
                          If $f \in \ca {WQ}_{L, \gamma, \mu}$, then the sequence $\{x_k\}$ generated by gradient descent with stepsize $\gamma/2L$, i.e.,
\begin{align*}
  x_{k+1}= x_{k} - \frac{\gamma}{2L} \nabla f(x_{k}),
  \end{align*}
  satisfies
  \begin{align*}
    \|x_{k+1} - x^*\|^2 \le \left( 1 - \frac{\gamma ^2 \mu }{4L} \right)^{k+1} \|x_0 - x^*\|^2.
    \end{align*}
  \end{lemma}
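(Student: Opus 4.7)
The plan is to derive this lemma directly by combining Proposition~\ref{prop:wq2w} with Lemma~\ref{lemma:convergence_wqsc} at the value of the auxiliary parameter $a$ that optimizes the resulting contraction. Concretely, Proposition~\ref{prop:wq2w} asserts that $f \in \ca{WQ}_{L,\gamma,\mu}$ implies $f \in \ca{W}_{L,\tilde{\gamma},a}$ with $\tilde{\gamma} = \mu\gamma/(\mu+a)$, valid for any $a > 0$. Lemma~\ref{lemma:convergence_wqsc} then applies to $f$ viewed as an element of $\ca{W}_{L,\tilde{\gamma},a}$, yielding, under gradient descent with stepsize $\tilde{\gamma}/L$, the contraction
\[
\|x_{k+1}-x^*\|^2 \le \left(1 - \frac{\tilde{\gamma}^2 a}{L}\right)^{k+1}\|x_0-x^*\|^2.
\]

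Next I would substitute $\tilde{\gamma} = \mu\gamma/(\mu+a)$ to rewrite the rate as $1 - \frac{\gamma^2\mu}{L}\cdot \frac{a/\mu}{(1+a/\mu)^2}$, as the paragraph preceding the lemma already records. The optimization step is elementary: setting $t = a/\mu$, the scalar map $t \mapsto t/(1+t)^2$ on $(0,\infty)$ is maximized at $t=1$ with maximum value $1/4$. Hence the choice $a = \mu$ gives both the tightest contraction factor $1 - \gamma^2\mu/(4L)$ and pins down the stepsize
\[
\frac{\tilde{\gamma}}{L} = \frac{\mu\gamma}{L(\mu+a)} \Big|_{a=\mu} = \frac{\gamma}{2L},
\]
which exactly matches the stepsize asserted in the lemma.

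Putting these pieces together gives the claimed inequality without any further calculation. There is no real obstacle here: the only non-routine observation is that the same value $a=\mu$ that optimizes the rate also produces a clean stepsize $\gamma/(2L)$ and a parameter $\tilde{\gamma} = \gamma/2$ for the ambient $\ca{W}$-class, so that invoking Lemma~\ref{lemma:convergence_wqsc} is legitimate with the stated stepsize.
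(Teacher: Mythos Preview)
Your proposal is correct and follows essentially the same route as the paper: the paragraph preceding the lemma already carries out exactly this argument---embed $\ca{WQ}_{L,\gamma,\mu}$ into $\ca{W}_{L,\tilde\gamma,a}$ via Proposition~\ref{prop:wq2w}, invoke Lemma~\ref{lemma:convergence_wqsc}, and optimize $a/\mu \mapsto \frac{a/\mu}{(1+a/\mu)^2}$ to find $a=\mu$, which simultaneously fixes the stepsize $\tilde\gamma/L=\gamma/(2L)$ and the contraction factor $1-\gamma^2\mu/(4L)$.
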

    \section{Accelerated gradient descent for function class $\ca W_{L, \gamma, \mu}$}
In this section, we shall develop the accelerated gradient method over the function class $\ca W_{L, \gamma, \mu}$. Our approach closely follows Nesterov's original treatment via an estimate sequence~\cite{nesterov2013introductory}. However as our function is nonconvex, a global estimate sequence is almost impossible to construct. It turns out that by slightly relaxing the definition of estimate sequence, one can construct \emph{weakly estimation sequence} achieving the same purpose as in~\cite{nesterov2013introductory}. 
\subsection{Weak Estimate Sequence}
In this section, we modify the definition of \emph{estimate sequence} by Nesterov and introduce the notion of a \emph{weak estimate sequence}. As we shall see, a weak estimate sequence can accommodate acceleration for nonconvex problems. Note that our definition is a close variant of Nesterov's original definition and all subsequent treatment follows closely Section $2.2$ in~\cite{nesterov2013introductory}.
\begin{definition}
  A weak estimate sequence is a sequence of functions $\{\phi_k(x)\}_{k=0}^{\infty}$ and a sequence of positive scalars $\{\lambda_k\}_k^{\infty}$ such that
\begin{align*}
  \lim_{k \to \infty} \lambda_k = 0 \quad \text{and } \quad \phi_k(x^*) \le (1-\lambda_k) f(x^*) + \lambda_k \phi_0(x^*),
\end{align*}
where $x^* \in \argmin f(x)$.
\end{definition}
We next present a proposition showing that we can utilize a weak estimate sequence to solve optimization problem analogous to Lemma $2.2.1$ in~\cite{nesterov2013introductory}.
\begin{proposition}
  If for some sequence $\{x_k\}_{k=0}^{\infty}$ we have
\begin{align*}
  f(x_k) \le \phi_k^* \equiv \min_{x \in \bb R^n} \phi_k(x),
\end{align*}
then $f(x_k) - f^* \le \lambda_k \left( \phi_0(x^*) - f^* \right)$.
\end{proposition}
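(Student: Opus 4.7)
The plan is to chain the two inequalities that the hypothesis and the definition of a weak estimate sequence directly provide, with no auxiliary construction needed. This is a straightforward analogue of Nesterov's Lemma 2.2.1, and the argument should fit in two lines.

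First, I would apply the hypothesis $f(x_k)\le \phi_k^*$ together with the trivial bound $\phi_k^* = \min_{x\in\bb R^n}\phi_k(x) \le \phi_k(x^*)$ to obtain $f(x_k) \le \phi_k(x^*)$. Next, I would invoke the defining inequality of a weak estimate sequence evaluated at the minimizer $x^*$, namely
\begin{align*}
  \phi_k(x^*) \le (1-\lambda_k)\, f(x^*) + \lambda_k\, \phi_0(x^*),
\end{align*}
to conclude $f(x_k) \le (1-\lambda_k) f^* + \lambda_k \phi_0(x^*)$. Subtracting $f^*$ from both sides yields exactly $f(x_k) - f^* \le \lambda_k\,(\phi_0(x^*) - f^*)$.

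There is no real obstacle here: the proposition is essentially a bookkeeping consequence of the definition. The only thing to be mindful of is that the bound $\phi_0(x^*) - f^*$ on the right-hand side is a constant determined solely by the initialization, so combined with $\lambda_k \to 0$ from the definition, this immediately gives $f(x_k) \to f^*$, which is the whole point of introducing the weak estimate sequence. The subsequent work in the paper will then reduce to constructing such a sequence $\{\phi_k, \lambda_k\}$ with a quantifiable rate of decay on $\lambda_k$, but that burden lies outside this proposition.
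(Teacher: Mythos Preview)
Your proposal is correct and matches the paper's proof essentially line for line: the paper writes the single chain $f(x_k) \le \phi_k^* \le \phi_k(x^*) \le (1-\lambda_k)f(x^*) + \lambda_k \phi_0(x^*)$ and then subtracts $f^*$, exactly as you describe.
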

\begin{proof}
  It suffices to observe that,
\begin{align*}
  f(x_k) \le \phi_k^* = \min_{x \in \bb R^n} \phi_k(x) \le \phi_k(x^*) \le (1-\lambda_k) f(x^*) + \lambda_k \phi_0(x^*),
\end{align*}
where the last inequality follows from the definition of a weak estimate sequence.
\end{proof}
We next present a proposition describing how we can construct a weak estimate sequence;
this observation is a slight variant of Lemma $2.2.2$~\cite{nesterov2013introductory}.
\begin{proposition}
  Suppose that
\begin{itemize}
  \item
    $f$ is $(\gamma, \mu)$-weakly-quasi-strongly-convex,
  \item $\phi_0$ is an arbitrary function on $\bb R^n$,
  \item $\{y_k\}$ is an arbitrary sequence in $\bb R^n$,
    \item $\{\alpha_k\}_{k=0}^{\infty}$: $\alpha_k \in (0,1)$, $\sum_{k=0}^{\infty} \alpha_k = \infty$, and 
      \item $\lambda_0 = 1$.
      \end{itemize}
      Then the sequences $\{\phi_k(x)\}_{k=0}^{\infty}$ and $\{\lambda_k\}_{k=0}^{\infty}$ recurcisely defined by:
      \begin{align}
        \lambda_{k+1} &= (1-\alpha_k) \lambda_k, \nonumber \\
        \phi_{k+1} (x) &= (1-\alpha_k) \phi_k(x) + \alpha_k \left[ f(y_k) + \frac{1}{\gamma} \langle \nabla f(y_k),x-y_k\rangle + \frac{\mu}{2}  \|x-y_k\|^2\right], \label{eq:recursive_lower_estimate}
        \end{align}
        is a weak estimate sequence.
\end{proposition}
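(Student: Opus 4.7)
The plan is to verify the two defining conditions of a weak estimate sequence: (i) $\lambda_k \to 0$, and (ii) the inequality $\phi_k(x^*) \le (1-\lambda_k) f(x^*) + \lambda_k \phi_0(x^*)$ holds for every $k$. The first condition is the easier one: unrolling the recursion gives $\lambda_{k+1} = \prod_{i=0}^{k}(1-\alpha_i)$, and since $\alpha_i \in (0,1)$ with $\sum_i \alpha_i = \infty$, a standard argument (take logarithms and use $\ln(1-\alpha_i) \le -\alpha_i$) shows this product tends to $0$.

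The second condition I would prove by induction on $k$. The base case $k=0$ is immediate because $\lambda_0 = 1$, so the inequality collapses to $\phi_0(x^*) \le \phi_0(x^*)$. For the inductive step, the key ingredient is the $(\gamma,\mu)$-weakly-quasi-strongly-convexity assumption applied at the point $y_k$: rewriting the definition as
\begin{align*}
f(x^*) \ge f(y_k) + \frac{1}{\gamma}\langle \nabla f(y_k), x^* - y_k\rangle + \frac{\mu}{2}\|y_k - x^*\|^2,
\end{align*}
one sees that the bracketed quadratic in the recursion~\eqref{eq:recursive_lower_estimate}, when evaluated at $x = x^*$, is bounded above by $f(x^*)$. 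This is the crucial observation that replaces Nesterov's use of (strong) convexity everywhere and is what makes the estimate sequence ``weak'': the lower bound is only needed at $x^*$, not globally.

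Combining this with the induction hypothesis yields
\begin{align*}
\phi_{k+1}(x^*) &\le (1-\alpha_k)\bigl[(1-\lambda_k) f(x^*) + \lambda_k \phi_0(x^*)\bigr] + \alpha_k f(x^*) \\
&= \bigl[1-(1-\alpha_k)\lambda_k\bigr] f(x^*) + (1-\alpha_k)\lambda_k \phi_0(x^*) \\
&= (1-\lambda_{k+1}) f(x^*) + \lambda_{k+1} \phi_0(x^*),
\end{align*}
which completes the induction. The only real subtlety is recognizing that weak-quasi-strong-convexity furnishes exactly the one-point lower bound needed to dominate the quadratic term at $x^*$; everything else is bookkeeping with the recursion $\lambda_{k+1} = (1-\alpha_k)\lambda_k$.
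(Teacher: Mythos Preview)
Your proof is correct and follows essentially the same route as the paper: both verify the inequality at $x^*$ by induction, using the key observation that $(\gamma,\mu)$-weak-quasi-strong-convexity makes the bracketed quadratic in~\eqref{eq:recursive_lower_estimate} an underestimate of $f(x^*)$ when evaluated at $x^*$. You additionally spell out the standard argument for $\lambda_k\to 0$, which the paper leaves implicit.
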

\begin{proof}
  Putting
  \begin{align*}
    q_k(x; y_k) = f(y_k)  + \frac{1}{\gamma} \langle \nabla f(y_k),x-y_k\rangle + \frac{\mu}{2} \|x-y_k\|^2,
    \end{align*}
    we note that $q_k(x^*; y_k) \le f(x^*)$ for all $k \ge 1$. Now observe that $\phi_0(x^*) = (1-\lambda_0) f(x^*) + \lambda_0 \phi_0(x^*)$. Furthermore, suppose $\{\phi_j\}_{j=0}^k$ is a weak estimate sequence up to $k$. Then
    \begin{align*}
      \phi_{k+1}(x^*) &\le (1-\alpha_k) \phi_k(x^*) + \alpha_k q_k(x^*; y_k) \\
                      &\le (1-\alpha_k) \left[(1-\lambda_k) f(x^*) + \lambda_k \phi_0(x^*)\right] + \alpha_k f(x^*) \\
                      &\le (1-\alpha_k) \lambda_k \phi_0(x^*) + (1-\lambda_{k+1}) f(x^*) \\
                      &= \lambda_{k+1} \phi_0(x^*) + (1-\lambda_{k+1}) f(x^*).
      \end{align*}
  \end{proof}
  \subsection{An algorithm and its convergence}
 In order to devise the acceleration scheme, one
 needs to choose the sequence $\{y_k\}_{k=0}^{\infty}$ for which successive \emph{weak lower estimate} of $f^*$ can be formed and a function $\phi_0$ that is easy to minimize. One choice of $\phi_0$ is a simple quadratic function suggested by Nesterov~\cite{nesterov2013introductory}. The next proposition examines how $\phi_k^*\equiv \min_{x \in \bb R^n} \phi_k(x)$ varies under the construction of weak estimate sequence; we shall see that $\phi^*$  directly suggests choosing the sequence $\{y_k\}_{k=0}^{\infty}$ and $\{x_k\}_{k=0}^{\infty}$. Throughout the presentation we will use the sequence $\{v_k\}_{k=0}^{\infty}$ to denote the unique minimizers of $\{\phi_k\}_{k=0}^{\infty}$.\footnote{As $\phi_k(x)$'s are strongly convex functions, the minimizer is unique.} Needless to say, this lemma is a variant of Lemma $2.2.3$ in~\cite{nesterov2013introductory} and its proof has been adapted accordingly.
  \begin{lemma}
    \label{lemma:phi_star}
    Let $\phi_0(x) = \phi_0^* + \frac{\gamma_0}{2} \|x-v_0\|^2$, where $\phi_0^*$ is a scalar. Then the process~\eqref{eq:recursive_lower_estimate} preserves the canonical form of functions,
    \begin{align}
      \label{eq:canonical_phi}
      \phi_k(x) \equiv \phi_k^* + \frac{\gamma_k}{2} \|x-v_k\|^2,
      \end{align}
      where the sequences $\{\gamma_k\}$, $\{v_k\}$ and $\{\phi_k^*\}$ are given as,
        \begin{align}
          \nonumber
          \gamma_{k+1} &= (1-\alpha_k) \gamma_k +  \frac{ \alpha_k \mu}{\gamma}, \\
          \nonumber v_{k+1} &= \frac{1}{\gamma_{k+1}} \left[(1-\alpha_k) \gamma_k v_k + \frac{\alpha_k \mu}{\gamma} y_k - \frac{\alpha_k}{ \gamma} \nabla f(y_k)\right],\\
          \label{eq:phi_star} \phi_{k+1}^* &= (1-\alpha_k) \phi_k^* + \alpha_k f(y_k) - \frac{\alpha_k^2 }{2 \gamma^2 \gamma_{k+1}} \|\nabla f(y_k)\|^2 + \frac{\alpha_k(1-\alpha_k)\gamma_k}{\gamma_{k+1}} \left[ \frac{\mu}{2 \gamma} \|y_k - v_k\|^2 + \frac{1}{\gamma} \langle \nabla f(y_k), v_k-y_k\rangle\right].
        \end{align}
    \end{lemma}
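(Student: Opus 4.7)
The plan is to proceed by induction on $k$, adapting the standard argument for Nesterov's estimate sequence (Lemma 2.2.3 in~\cite{nesterov2013introductory}) to the present setting. The base case $k=0$ is immediate from the hypothesis on $\phi_0$. For the inductive step, assume $\phi_k(x) = \phi_k^* + (\gamma_k/2)\|x-v_k\|^2$ and substitute into the recursion~\eqref{eq:recursive_lower_estimate}. Since $\phi_{k+1}$ is then a sum of a strongly convex quadratic and an affine plus quadratic perturbation, it is itself a strongly convex quadratic. Reading off its Hessian gives the formula for $\gamma_{k+1}$, and the canonical form~\eqref{eq:canonical_phi} is preserved.

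Next I would identify $v_{k+1}$ as the unique minimizer via the first-order condition $\nabla \phi_{k+1}(v_{k+1}) = 0$. Computing
\begin{align*}
\nabla \phi_{k+1}(x) = (1-\alpha_k)\gamma_k(x-v_k) + \frac{\alpha_k}{\gamma}\nabla f(y_k) + \frac{\alpha_k \mu}{\gamma}(x-y_k)
\end{align*}
and setting it to zero, then dividing by $\gamma_{k+1}$, yields the stated formula for $v_{k+1}$ after collecting the coefficients of $v_k$, $y_k$ and $\nabla f(y_k)$. This is routine linear algebra.

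The main technical step is computing $\phi_{k+1}^* = \phi_{k+1}(v_{k+1})$. Directly minimizing and substituting is messy; the cleaner route, following Nesterov, is to evaluate $\phi_{k+1}$ at the auxiliary point $y_k$. On the one hand, using the canonical form gives $\phi_{k+1}(y_k) = \phi_{k+1}^* + (\gamma_{k+1}/2)\|y_k - v_{k+1}\|^2$. On the other hand, evaluating the recursion~\eqref{eq:recursive_lower_estimate} directly at $y_k$ causes the inner product and quadratic-in-$y_k$ terms to vanish, leaving
\begin{align*}
\phi_{k+1}(y_k) = (1-\alpha_k)\Bigl[\phi_k^* + \frac{\gamma_k}{2}\|y_k - v_k\|^2\Bigr] + \alpha_k f(y_k).
\end{align*}
Solving for $\phi_{k+1}^*$ reduces the problem to expanding $\|y_k - v_{k+1}\|^2$ from the explicit formula for $v_{k+1}$.

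The expected obstacle is bookkeeping in that final squared-norm expansion: three distinct terms arise, namely a $\|\nabla f(y_k)\|^2$ piece, a $\|y_k - v_k\|^2$ piece, and a cross-term $\langle \nabla f(y_k), v_k - y_k\rangle$. One must verify that the $\|\nabla f(y_k)\|^2$ contribution produces the $\alpha_k^2/(2\gamma^2 \gamma_{k+1})$ coefficient, that the $\|y_k - v_k\|^2$ piece combines with the existing $(1-\alpha_k)(\gamma_k/2)\|y_k-v_k\|^2$ term in $\phi_{k+1}(y_k)$ to produce exactly the $\mu/(2\gamma)$ residual weighted by $\alpha_k(1-\alpha_k)\gamma_k/\gamma_{k+1}$, and that the cross-term gives the corresponding $\langle \nabla f(y_k), v_k-y_k\rangle/\gamma$ contribution. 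No new idea beyond the original Nesterov derivation is required; the only care needed is to carry the $1/\gamma$ factors through consistently from the weakly-quasi-strongly-convex surrogate $q_k(x; y_k)$.
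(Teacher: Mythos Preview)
Your proposal is correct and follows essentially the same route as the paper's proof: induction, Hessian identification for $\gamma_{k+1}$, first-order optimality for $v_{k+1}$, and then the key device of evaluating $\phi_{k+1}$ at $y_k$ two ways to isolate $\phi_{k+1}^*$ via the expansion of $\|y_k-v_{k+1}\|^2$. The paper proceeds identically, citing Lemma~2.2.3 of~\cite{nesterov2013introductory} and carrying through the same bookkeeping you outline.
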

    \begin{proof}
      We first observe that $\phi_0''(x) = \gamma_0 I_n$. Suppose the statement of the lemma holds for $j \le k$; then
      \begin{align*}
        \phi_{k+1}''(x) = (1-\alpha_k) \phi_k''(x) + \alpha_k \mu I = \left[(1-\alpha_k)\gamma_k + \alpha_k \mu\right]I_n \equiv \gamma_{k+1}I_n,
        \end{align*}
        where $I_n$ is the $n \times n$ real identity matrix. This shows that the canonical form holds for~\eqref{eq:canonical_phi}.\\
        Moreover noting that,
        \begin{align}
          \label{eq:phi_inter}
          \phi_{k+1}(x) = (1-\alpha_k) \left( \phi_k^* + \frac{\gamma_k}{2} \|x-v_k\|^2\right) + \alpha_k \left[f(y_k) + \frac{1}{\gamma}\langle \nabla f(y_k), x-y_k\rangle + \frac{\mu}{2}\|x-y_k\|^2\right],
          \end{align}
          the first-order optimality condition implies $\nabla \phi_{k+1}(v_{k+1}) = 0$\footnote{Recall $v_{k+1}$ is the uniqe minimizer for $\phi_{k+1}(x)$.}, i.e.,
          \begin{align*}
            (1-\alpha_k)\gamma_k(v_{k+1}-v_k) + \frac{\alpha_k}{ \gamma} \nabla f(y_k) + \alpha_k \mu (v_{k+1}-y_k) =0,
            \end{align*}
           that can be re-written as,
          \begin{align*}
           v_{k+1} - y_k =  \frac{1}{\gamma_{k+1}} \left[(1-\alpha_k)\gamma_k(v_k-y_k) - \frac{\alpha_k}{ \gamma} \nabla f(y_k)\right].
            \end{align*}
            It thus follows that,
            \begin{align*}
              \frac{\gamma_{k+1}}{2} \|v_{k+1} - y_k\|^2 = \frac{1}{2 \gamma_{k+1}} \left[ (1-\alpha_k)^2 \gamma_k^2 \|v_k-y_k\|^2 - \frac{2 \alpha_k (1-\alpha_k)\gamma_k}{ \gamma} \langle \nabla f(y_k), v_k-y_k\rangle + \frac{\alpha_k^2}{ \gamma^2} \|\nabla f(y_k)\|^2 \right].
              \end{align*}
              Now in view of the recursion of $\{\phi_k(x)\}$, we have
              \begin{align}
                \label{eq:phi_inter2}
                \begin{split}
                \phi_k^* + \gamma_k\|y_k - v_{k+1}\|^2 = \phi_k(y_k)  & \\
& = (1-\alpha_k)\phi_k^* + \frac{(1-\alpha_k)\gamma_k}{2}\|y_k-v_k\|^2 + \alpha_k f(y_k).
\end{split}
                \end{align}
              Substituting this relation into~\eqref{eq:phi_inter2} and observing that,
              \begin{align*}
                (1-\alpha_k)\frac{\gamma_k}{2} - \frac{1}{2 \gamma_{k+1}} (1-\alpha_k)^2 \gamma_k^2 = (1-\alpha_k) \frac{\gamma_k}{2}\left(1 - \frac{(1-\alpha_k)\gamma_k}{\gamma_{k+1}}\right) = (1-\alpha_k)\frac{\gamma_k}{2} \cdot \frac{\alpha_k \mu}{\gamma_{k+1}},
                \end{align*}
                we can conclude~\eqref{eq:phi_star}.
      \end{proof}
      Now let us describe how we can design the accelerated gradient descent algorithm. Suppose we have already chosen $x_k$ satisfying $f(x_k) \le \phi_k^*$. Then in view of Lemma~\ref{lemma:phi_star}, we have
      \begin{align}
        \label{eq:phi_star2}
        \phi_{k+1}^* \ge (1-\alpha_k) f(x_k) + \alpha_k f(y_k) - \frac{\alpha_k^2 }{2 \gamma^2 \gamma_{k+1}}\|\nabla f(y_k)\|^2 + \frac{\alpha_k(1-\alpha_k)\gamma_k}{\gamma_{k+1} \gamma }   \langle \nabla f(y_k), v_k-y_k\rangle.
      \end{align}
      If $f$ had been a convex function, we could underestimate $f(x_k) \ge f(y_k) + \langle \nabla f(y_k), x_k-y_k\rangle$. Then setting $y_k$ to be a linear combination of $v_k$ and $x_k$ in such a way that $\langle \nabla f(y_k), \cdot \rangle$ would vanish, a simple gradient step would allow us to choose $x_{k+1}$ (see details in Section $2.2$~\cite{nesterov2013introductory}). However since we are concerned with a nonconvex function $f$, this strategy does not work. Let us examine this situation more closely:
\par
\vspace{3pt}
\emph{We want to choose $y_k$ and $x_{k+1}$ in such a way that both can easily be computed and  satisfy $f(x_{k+1}) \le \phi_{k+1}^*$. 
A promising strategy is to reduce ~\eqref{eq:phi_star2} to,
\begin{align*}
        \phi_{k+1}^* \ge  f(y_k) - \frac{\alpha_k^2 }{2\gamma^2 \gamma_{k+1}}\|\nabla f(y_k)\|^2.
  \end{align*}
  This would then enable us to choose $x_{k+1}$ by a simple gradient step. This reduction however would require that,
\begin{align}
  \label{eq:line_search_condition}
  f(x_{k}) + \frac{\alpha_k \gamma_k}{\gamma_{k+1} \gamma} \langle \nabla f(y_k), v_k - y_k\rangle + \frac{\alpha_k \gamma_k \mu}{2\gamma_{k+1}} \|y_k - v_k\|^2 \ge f(y_k).
\end{align}}
\vspace{3pt}
      \par As suggested in~\cite{nesterov2019primal}, a line search over the line segment $[x_k, v_k]$ would guarantee the above inequality~\eqref{eq:line_search_condition} hold. The next result and its proof are extracted from proof of Lemma $1$ in~\cite{nesterov2019primal}.
\begin{proposition}
  \label{prop:line_search}
  If we set $y_k$ as
      \begin{align*}
        y_k = v_k + \beta_k(x_k - v_k), \text{ where }
        \beta_k = \argmin_{\beta \in [0, 1]} f( v_k + \beta(x_k-v_k)),
        \end{align*}
        then we have
        \begin{align*}
          f(y_k) \le f(x_k), \text{ and } \langle \nabla f(y_k), v_k -y_k\rangle \ge 0.
          \end{align*}
          \end{proposition}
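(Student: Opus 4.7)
The plan is to reduce the claim to a standard first-order optimality argument for a one-dimensional minimization on the closed interval $[0,1]$. Define the univariate function
\begin{align*}
g(\beta) := f\bigl(v_k + \beta(x_k - v_k)\bigr), \qquad \beta \in [0,1],
\end{align*}
so that $\beta_k$ is a minimizer of $g$ on $[0,1]$ and $y_k = v_k + \beta_k(x_k-v_k)$.

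The first inequality $f(y_k)\le f(x_k)$ is immediate: since $\beta = 1$ lies in the feasible interval, $f(y_k) = g(\beta_k) \le g(1) = f(x_k)$.

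For the second inequality, I would use the chain rule to write $g'(\beta) = \langle \nabla f(v_k + \beta(x_k-v_k)), x_k-v_k\rangle$, together with the identity $v_k - y_k = -\beta_k(x_k - v_k)$, which gives
\begin{align*}
\langle \nabla f(y_k), v_k - y_k\rangle = -\beta_k\, g'(\beta_k).
\end{align*}
I then split into the three standard cases coming from the first-order optimality conditions for minimizing $g$ on $[0,1]$: (i) if $\beta_k \in (0,1)$, then $g'(\beta_k) = 0$ and the inner product vanishes; (ii) if $\beta_k = 0$, then $v_k - y_k = 0$ and the inner product is trivially $0$; (iii) if $\beta_k = 1$, optimality at the right endpoint forces $g'(1) \le 0$, and since the prefactor $-\beta_k = -1$ is negative, the product $-\beta_k g'(\beta_k)$ is nonnegative. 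In every case we get $\langle \nabla f(y_k), v_k - y_k\rangle \ge 0$, which is what we wanted.

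There is really no hard step: the only subtlety is to remember that the line search is over a \emph{closed} interval, so the first-order condition is a one-sided inequality at the endpoints rather than $g'(\beta_k) = 0$, but once the sign of $-\beta_k$ is tracked carefully this dispenses with all three cases uniformly. No regularity of $f$ beyond differentiability is needed, and no use of the weakly-quasi-strong-convexity hypothesis enters the argument.
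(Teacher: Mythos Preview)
Your proof is correct and follows essentially the same approach as the paper: both argue $f(y_k)\le f(x_k)$ directly from the definition of the line search, and then obtain the sign of $\langle \nabla f(y_k), v_k-y_k\rangle$ by a three-case analysis depending on whether $\beta_k$ lies in the interior of $[0,1]$ or at one of the endpoints. Your explicit introduction of $g(\beta)$ and the identity $\langle \nabla f(y_k), v_k-y_k\rangle=-\beta_k\,g'(\beta_k)$ is a slightly cleaner packaging, but the underlying argument is identical.
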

                                                                                                                                                                                                                                                                                                                                                                                   \begin{proof}
        First, it is clear that $f(y_k) \le f(x_k)$. Next we examine three cases:
        \begin{enumerate}
          \item If $\beta_k = 0$, then $y_k = v_k$.
            \item If $\beta_k \in (0,1)$, by optimality condition $\langle \nabla f(y_k), x_k-v_k \rangle = 0$. As $y_k = v_k + \beta_k(x_k - v_k)$, we have $\langle \nabla f(y_k), y_k-v_k \rangle = 0 $.
              \item If $\beta_k = 1$, $y_k = x_k$ and $\langle \nabla f(y_k), x_k - v_k \rangle \le 0$.
          \end{enumerate}
          \end{proof}                                                                                                                                                                           Note that Proposition~\ref{prop:line_search} implies the relation~\eqref{eq:line_search_condition}. 
Now suppose that $y_k$ is chosen such that inequality~\eqref{eq:line_search_condition} holds;
it then follows that,
      \begin{align*}
        \phi_{k+1}^* \ge  f(y_k) - \frac{\alpha_k^2 }{2 \gamma^2 \gamma_{k+1}}\|\nabla f(y_k)\|^2.
      \end{align*}
      Then setting $x_{k+1} = y_k - \frac{1}{L}\nabla f(y_k)$ would guarantee the sufficient decrease of the function value. This essentially requires
      \begin{align*}
\frac{\alpha_k^2 }{2 \gamma^2 \gamma_{k+1}} = \frac{1}{2L},
        \end{align*}
        that is, $\alpha_k$ solves
        \begin{align*}
          \frac{L \alpha_k^2}{ \gamma^2} = (1-\alpha_k) \gamma_k + \alpha_k \mu.
          \end{align*}
          We are now in the position to describe our algorithm:
          \begin{algorithm}[H]
            \caption{Accelerated gradient descent for quasi-strongly-convex and quasi-convex functions}
            \label{alg1}
            \begin{algorithmic}[1]
              \State \text{Initialize } $x_0 \in \bb R^n$ and $\gamma_0 > 0$. \text{Set } $v_0 = x_0$.
              \If{ $k \ge 0$}
              \State \text{Compute $\alpha_k \in (0, 1)$ satisfying}
              \begin{align*}
          \frac{L \alpha_k^2}{ \gamma^2} = (1-\alpha_k) \gamma_k +  \alpha_k \mu.
                \end{align*}
                \State Set: $\gamma_{k+1} = (1-\alpha_k)\gamma_k+\alpha_k \mu$.
                \State
                Choose $y_k = v_k + \beta_k(x_k- v_k)$ with $\beta_k \in [0, 1]$ such that
                \begin{align*}
  f(x_{k}) + \frac{\alpha_k \gamma_k}{\gamma_{k+1} \gamma} \langle \nabla f(y_k), v_k - y_k\rangle + \frac{\alpha_k \gamma_k \mu}{2\gamma_{k+1}} \|y_k - v_k\|^2 \ge f(y_k).
                  \end{align*}
                  \State Set: $x_{k+1} = y_k - \frac{1}{L} \nabla f(y_k)$.
                  \State Set:
                  \begin{align*}
                    v_{k+1} = \frac{1}{\gamma_{k+1}} \left[(1-\alpha_k) \gamma_k v_k + \alpha_k \mu y_k - \frac{\alpha_k}{ \gamma} \nabla f(y_k)\right].
                    \end{align*}
                \EndIf
              \end{algorithmic}
            \end{algorithm}                                                                                            Note that $\beta_k$ Step $5$ always exists by Proposition~\ref{prop:line_search}. A backtracking line search thereby can be adopted.
            Next, we establish the convergence rate of the proposed algorithm.
            \begin{theorem}
              If the sequence $\{x_k\}_{k=0}^{\infty}$ is generated by Algorithm~\ref{alg1}, then
              \begin{align*}
                f(x_k)- f^* \le \lambda_k \left( f(x_0) - f^* + \frac{\gamma_0}{2}\|x_0-x^*\|^2\right),
                \end{align*}
                where $\lambda_0=1$ and $\lambda_k = \Pi_{i=0}^{k=1} (1-\alpha_i)$.
              \end{theorem}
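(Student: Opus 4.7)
The plan is to combine the weak estimate sequence machinery with the gradient-step/line-search pair built into Algorithm~\ref{alg1}. Concretely, by the proposition preceding Lemma~\ref{lemma:phi_star}, the desired conclusion follows from two ingredients: (i) the invariant $f(x_k)\le \phi_k^*$ holds for every $k\ge 0$; and (ii) the initialization $\phi_0^* = f(x_0)$ together with $v_0 = x_0$ so that $\phi_0(x^*) = f(x_0) + \frac{\gamma_0}{2}\|x_0-x^*\|^2$. Given these, applying $f(x_k) - f^* \le \lambda_k(\phi_0(x^*)-f^*)$ with $\lambda_k = \prod_{i=0}^{k-1}(1-\alpha_i)$ yields exactly the stated bound.

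The core of the argument is to establish the invariant $f(x_k) \le \phi_k^*$ by induction on $k$. The base case is immediate from the initialization. For the inductive step, substituting $f(x_k)\le \phi_k^*$ into the recursion~\eqref{eq:phi_star} from Lemma~\ref{lemma:phi_star} gives
\begin{align*}
\phi_{k+1}^* \ge (1-\alpha_k)f(x_k) + \alpha_k f(y_k) - \frac{\alpha_k^2}{2\gamma^2\gamma_{k+1}}\|\nabla f(y_k)\|^2 + \frac{\alpha_k(1-\alpha_k)\gamma_k}{\gamma_{k+1}}\left[\frac{\mu}{2\gamma}\|y_k-v_k\|^2 + \frac{1}{\gamma}\langle \nabla f(y_k), v_k-y_k\rangle\right].
\end{align*}
The line-search requirement in Step 5 of Algorithm~\ref{alg1}, whose feasibility is guaranteed by Proposition~\ref{prop:line_search}, is precisely the statement that, after multiplying by $(1-\alpha_k)$ and adding $\alpha_k f(y_k)$ to both sides, the first, second, and fourth terms on the right-hand side above together dominate $f(y_k)$. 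Combined with the algorithmic choice $L\alpha_k^2/\gamma^2 = \gamma_{k+1}$, which collapses $\frac{\alpha_k^2}{2\gamma^2\gamma_{k+1}}$ to $\frac{1}{2L}$, the estimate reduces to $\phi_{k+1}^* \ge f(y_k) - \frac{1}{2L}\|\nabla f(y_k)\|^2$. Applying the $L$-smoothness descent lemma to the gradient step $x_{k+1} = y_k - \frac{1}{L}\nabla f(y_k)$ then gives $f(x_{k+1}) \le f(y_k) - \frac{1}{2L}\|\nabla f(y_k)\|^2 \le \phi_{k+1}^*$, closing the induction.

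The only substantive obstacle is recognizing how the line-search inequality precisely neutralizes the unwanted mixed inner-product and squared-distance terms produced by the weak estimate sequence recursion; this is the nonconvex analogue of the convexity-based step in Nesterov's original derivation, where the lower bound $f(x_k)\ge f(y_k) + \langle \nabla f(y_k), x_k-y_k\rangle$ does the same bookkeeping. Once this absorption is observed, the remainder of the argument is arithmetic dictated by the algorithmic choice of $\alpha_k$ and standard $L$-smoothness.
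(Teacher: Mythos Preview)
Your proposal is correct and follows essentially the same approach as the paper: the paper's proof is a terse two-line appeal to ``by construction,'' relying on the discussion preceding the theorem (the reduction from~\eqref{eq:phi_star2} through~\eqref{eq:line_search_condition} to the gradient-step descent), while you simply make that inductive invariant $f(x_k)\le\phi_k^*$ explicit. The only minor remark is that the $\|y_k-v_k\|^2$ coefficients in the $\phi_{k+1}^*$ recursion and in the Step~5 line-search condition differ by a factor of $1/\gamma$, but since $\gamma\in(0,1]$ and the term is nonnegative, your absorption step goes through as stated.
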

              \begin{proof}
                If we choose $\phi_0(x) = f(x_0)+ \frac{\gamma_0}{2}\|x-v_0\|^2$. Then $f(x_0)=\phi_0^*$ and by contruction we have $f(x_k) \le \phi_k^*$.
                \end{proof}
                Next, let us estimate $\lambda_k$.
                \begin{lemma}
                  If we choose $\gamma_0 \ge \mu$, then
                  \begin{align*}
                    \lambda_k \le \min \left( \left( 1-\sqrt{\frac{\mu \gamma^2 }{L}}\right)^k, \frac{4L}{(2 \sqrt{L} + \gamma k \sqrt{\gamma_0})^2}\right).
                    \end{align*}
                  \end{lemma}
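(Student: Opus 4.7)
The plan is to prove the two bounds separately, both resting on a monotonicity property of the auxiliary sequence $\{\gamma_k\}$. First I would verify by induction that $\gamma_k \ge \max(\mu,\, \gamma_0\lambda_k)$ under the hypothesis $\gamma_0 \ge \mu$. The lower bound by $\mu$ follows because $\gamma_{k+1} = (1-\alpha_k)\gamma_k + \alpha_k\mu$ is a convex combination of $\gamma_k$ and $\mu$; if $\gamma_k\ge\mu$ then $\gamma_{k+1}\ge\mu$. The lower bound by $\gamma_0\lambda_k$ comes from dropping the $\alpha_k\mu$ term and using $\lambda_{k+1}=(1-\alpha_k)\lambda_k$:
\begin{align*}
\gamma_{k+1} \ge (1-\alpha_k)\gamma_k \ge (1-\alpha_k)\gamma_0\lambda_k = \gamma_0\lambda_{k+1}.
\end{align*}

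For the first (linear) bound, I would combine the defining relation $L\alpha_k^2/\gamma^2 = \gamma_{k+1}$ with the just-established inequality $\gamma_{k+1}\ge\mu$ to conclude $\alpha_k \ge \gamma\sqrt{\mu/L}$. Plugging this into $\lambda_{k+1} = (1-\alpha_k)\lambda_k$ and iterating gives $\lambda_k \le \bigl(1-\sqrt{\mu\gamma^2/L}\bigr)^k$ directly.

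For the sublinear bound, the standard trick (Lemma 2.2.4 in Nesterov's book) is to pass to $b_k := 1/\sqrt{\lambda_k}$ and show that $b_k$ grows at least linearly. Writing
\begin{align*}
\sqrt{\lambda_k} - \sqrt{\lambda_{k+1}} = \frac{\lambda_k - \lambda_{k+1}}{\sqrt{\lambda_k}+\sqrt{\lambda_{k+1}}} = \frac{\alpha_k\lambda_k}{\sqrt{\lambda_k}+\sqrt{\lambda_{k+1}}} \ge \frac{\alpha_k\sqrt{\lambda_k}}{2},
\end{align*}
and then rearranging $\sqrt{\lambda_k}-\sqrt{\lambda_{k+1}} = (b_{k+1}-b_k)/(b_kb_{k+1})$, I get $b_{k+1}-b_k \ge \alpha_k b_{k+1}/2$. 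The key step is to bound $\alpha_k b_{k+1} = \alpha_k/\sqrt{\lambda_{k+1}}$ from below: from $\alpha_k = \gamma\sqrt{\gamma_{k+1}/L}$ and $\gamma_{k+1} \ge \gamma_0\lambda_{k+1}$, we obtain $\alpha_k/\sqrt{\lambda_{k+1}} \ge \gamma\sqrt{\gamma_0/L}$, so $b_{k+1}-b_k \ge \gamma\sqrt{\gamma_0/L}/2$. Iterating from $b_0 = 1$ yields $b_k \ge (2\sqrt{L}+\gamma k\sqrt{\gamma_0})/(2\sqrt{L})$, and inverting and squaring gives the desired $\lambda_k \le 4L/(2\sqrt{L}+\gamma k\sqrt{\gamma_0})^2$.

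The main obstacle is really just the careful bookkeeping in the second bound, in particular correctly choosing between bounding $\alpha_k$ via $\sqrt{\lambda_k}$ versus $\sqrt{\lambda_{k+1}}$ so that the telescoping succeeds; once the identity $\gamma_{k+1}\ge\gamma_0\lambda_{k+1}$ is in hand, the rest is essentially a direct translation of Nesterov's argument with the parameter $\gamma$ accompanying the weak-quasi-convexity factor. Taking the minimum of the two estimates completes the proof.
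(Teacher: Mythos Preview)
Your proposal is correct and follows essentially the same approach as the paper: both establish $\gamma_k\ge\mu$ and $\gamma_k\ge\gamma_0\lambda_k$ by induction, deduce $\alpha_k\ge\sqrt{\mu\gamma^2/L}$ for the linear bound, and then pass to $b_k=1/\sqrt{\lambda_k}$ to obtain the telescoping estimate $b_{k+1}-b_k\ge\tfrac{\gamma}{2}\sqrt{\gamma_0/L}$ for the sublinear bound. The only cosmetic difference is that the paper computes $b_{k+1}-b_k$ directly as $(\sqrt{\lambda_k}-\sqrt{\lambda_{k+1}})/\sqrt{\lambda_k\lambda_{k+1}}$, whereas you first bound $\sqrt{\lambda_k}-\sqrt{\lambda_{k+1}}$ and then convert; both manipulations yield the same inequality.
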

                  \begin{proof}
                    If $\gamma_k \ge \mu$, then
                    \begin{align*}
                      \gamma_{k+1} = \frac{L \alpha_k^2}{ \gamma^2} = (1-\alpha_k) \gamma_k + \alpha_k \mu\ge \mu.
                      \end{align*}
                      Since $\gamma_0 \ge \mu/\gamma$, by induction, it is true for all $k \ge 0$. So we have $\alpha_k \ge \sqrt{\frac{\mu \gamma}{L}}$.\par
                    Let us prove $\gamma_k \ge \gamma_0 \lambda_k$ by induction. It is evident that $\gamma_0 = \gamma_0 \lambda_0$. If this inequality holds for $j \le k$, we have
                    \begin{align*}
                      \gamma_{k+1} \ge (1-\alpha_k) \lambda_k \ge (1-\alpha_k) \gamma_0 \lambda_k = \gamma_0 \lambda_{k+1}.
                      \end{align*}
                      Hence $L\alpha_k^2 /\gamma^2 = \gamma_{k+1} \ge \gamma_0 \lambda_{k+1}$. \par
                      Putting $b_{k} = \frac{1}{\sqrt{\lambda_k}}$ and noting $\{\lambda_k\}$ is nonincreasing, we have
                      \begin{align*}
                        b_{k+1} - b_k &= \frac{\sqrt{\lambda_k} - \sqrt{\lambda_{k+1}}}{\sqrt{\lambda_k \lambda_{k+1}}} = \frac{\lambda_k - \lambda_{k+1}}{\sqrt{\lambda_k \lambda_{k+1}}(\sqrt{\lambda_k} + \sqrt{\lambda_{k+1}})} \ge \frac{\lambda_k - \lambda_{k+1}}{2\lambda_k\sqrt{\lambda_{k+1}}} \\
                        &= \frac{\lambda_k - (1-\alpha_k)\lambda_k}{2 \lambda_k \sqrt{\lambda_{k+1}}} = \frac{\alpha_k}{2 \sqrt{\lambda_{k+1}}} \ge \frac{\gamma}{2} \sqrt{\frac{\gamma_0}{L}}.
                        \end{align*}
                        Hence $b_k \ge 1 + \frac{ \gamma k}{2 }\sqrt{ \frac{\gamma_0}{L}}$ and the statement follows.
                    \end{proof}
                    We can now present the exact convergence rate of the proposed algorithm.
                    \begin{theorem}
                      \label{thrm:wqc_convergence_rate}
                      Putting $\gamma_0 = \max(L, \mu/\gamma)$, the sequence $\{x_k\}$ generated by Algorithm~(\ref{alg1}) satisfies
                      \begin{align*}
                        f(x_k) - f^* \le \min\left( \left( 1-\sqrt{\frac{\mu \gamma^2}{L}}\right)^k, \frac{4}{(2   +\gamma  k )^2} \right) L \|x_0-x^*\|^2.
                        \end{align*}
                      \end{theorem}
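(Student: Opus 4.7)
The plan is to chain together the two preceding results: the theorem bounding $f(x_k) - f^*$ by $\lambda_k$ times an initial potential, and the lemma bounding $\lambda_k$ itself. What remains is to (i) control the initial potential $f(x_0) - f^* + \frac{\gamma_0}{2}\|x_0 - x^*\|^2$ in terms of $L\|x_0 - x^*\|^2$, and (ii) specialize the $\lambda_k$ bound to $\gamma_0 = \max(L, \mu/\gamma)$.

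For step (i), I would invoke $L$-smoothness of $f$: since any $x^* \in \chi$ satisfies $\nabla f(x^*) = 0$, the standard descent inequality yields $f(x_0) - f^* \le \tfrac{L}{2}\|x_0 - x^*\|^2$ with no convexity assumption needed. In the typical regime $L \ge \mu/\gamma$ we have $\gamma_0 = L$, so the initial potential is bounded by $\tfrac{L}{2}\|x_0 - x^*\|^2 + \tfrac{L}{2}\|x_0 - x^*\|^2 = L\|x_0 - x^*\|^2$. The choice $\gamma_0 \ge \mu/\gamma \ge \mu$ simultaneously ensures that the hypothesis of the preceding lemma on $\lambda_k$ is satisfied.

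For step (ii), I substitute $\gamma_0 = L$ into the lemma's bound $\lambda_k \le \min\bigl((1 - \sqrt{\mu\gamma^2/L})^{k},\, \tfrac{4L}{(2\sqrt{L} + \gamma k\sqrt{\gamma_0})^2}\bigr)$. The second term collapses to $\tfrac{4L}{(2\sqrt{L} + \gamma k\sqrt{L})^2} = \tfrac{4}{(2 + \gamma k)^2}$, so that the $\lambda_k$ bound matches the factor appearing in the theorem statement. Multiplying the bounds from steps (i) and (ii) yields the claimed estimate.

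The argument is essentially routine once the previous theorem and lemma are in hand; the only point requiring care is ensuring that the smoothness-based bound in step (i) does not need convexity of $f$, which is immediate from $\nabla f(x^*) = 0$ at the global minimizer. A secondary bookkeeping check is that both conditions $\gamma_0 \ge L$ and $\gamma_0 \ge \mu/\gamma$ arising in the preceding lemma are absorbed by the single choice $\gamma_0 = \max(L, \mu/\gamma)$, so no additional hypothesis on the function class is required.
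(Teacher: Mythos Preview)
Your proposal is correct and follows exactly the route the paper implicitly takes: the paper states this theorem without proof, treating it as immediate from the preceding theorem (bounding $f(x_k)-f^*$ by $\lambda_k(\phi_0(x^*)-f^*)$) and lemma (bounding $\lambda_k$), together with the $L$-smoothness estimate $f(x_0)-f^* \le \tfrac{L}{2}\|x_0-x^*\|^2$. Your only caveat worth flagging is that the bound on the initial potential by $L\|x_0-x^*\|^2$ relies on $\gamma_0 = L$, i.e.\ on the regime $L \ge \mu/\gamma$; the paper does not address the opposite case either, so your treatment matches the paper's level of rigor.
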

                      If $\mu =0$, namely for the $\gamma$-weakly-quasi-convex case, we obtain the following rate of convergence.
                      \begin{corollary}
                        If $f$ is $\gamma$-weakly-quasi-convex, then the sequence $\{x_k\}$ generated by Algorithm~(\ref{alg1}) satisfies
                        \begin{align*}
                        f(x_k) - f^* \le  \frac{4L\|x_0-x^*\|^2}{(2   +\gamma  k )^2} .
                          \end{align*}
                        \end{corollary}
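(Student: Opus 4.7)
My plan is to obtain the corollary as a direct specialization of Theorem~\ref{thrm:wqc_convergence_rate} to the case $\mu = 0$. First, I would observe that a $\gamma$-weakly-quasi-convex function satisfies the defining inequality of $(\gamma,\mu)$-weakly-quasi-strongly-convex with $\mu = 0$, so $f \in \ca{W}_{L,\gamma,0}$, and hence Algorithm~\ref{alg1} and Theorem~\ref{thrm:wqc_convergence_rate} both apply with the parameter choice $\mu = 0$.

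Next, I would substitute $\mu = 0$ into the prescribed initialization $\gamma_0 = \max(L, \mu/\gamma)$, which yields $\gamma_0 = L$, matching the constant $L\|x_0-x^*\|^2$ that appears in the final bound. With $\mu = 0$, the first argument of the minimum in the conclusion of Theorem~\ref{thrm:wqc_convergence_rate} degenerates to $(1 - 0)^k = 1$, so it contributes no information, while the second argument $\tfrac{4}{(2+\gamma k)^2}$ remains nontrivial and controls the minimum. Carrying this through directly gives
\begin{equation*}
f(x_k) - f^* \;\le\; \frac{4 L \|x_0 - x^*\|^2}{(2 + \gamma k)^2},
\end{equation*}
which is precisely the stated corollary.

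Since the argument is a pure specialization of an already-proved theorem, I do not anticipate any genuine obstacle; the only care needed is to verify that the hypotheses of Theorem~\ref{thrm:wqc_convergence_rate} (in particular the parameter choice $\gamma_0 \ge \mu/\gamma$ used in the preceding lemma estimating $\lambda_k$) remain valid when $\mu=0$, and that the algorithm's line-search step via Proposition~\ref{prop:line_search} does not rely on $\mu > 0$. Both checks are immediate from inspection of the statements, so the corollary follows without further computation.
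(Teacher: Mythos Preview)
Your proposal is correct and matches the paper's approach exactly: the corollary is stated immediately after Theorem~\ref{thrm:wqc_convergence_rate} as the specialization to $\mu = 0$, with no additional argument given. Your observation that the first term in the minimum degenerates to $1$ while the second term $\tfrac{4}{(2+\gamma k)^2}$ survives is precisely the intended reading.
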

                        \begin{remark}
                          This result seems to be better than the one provided by Theorem $4$ in~\cite{nesterov2019primal} for weakly-quasi-convex functions, where the rate is
                          \begin{align*}
                            f(x_k) - f(x^*) = O \left( \frac{L\|x_0-x^*\|^2}{\gamma^3 k^2}\right).
                            \end{align*}
                            We impove the dependence on $\gamma$ by a factor $1/\gamma$ (recall $\gamma < 1$). Moreover, the proof in~\cite{nesterov2019primal} relies on a restarting technique whereas the proof given here is directly adapted from Nesterov's estimate sequence method. \par
                            The convergence rate in Theorem~\ref{thrm:wqc_convergence_rate} is at the same order with the one given by Guminov et al.~\cite{guminov2017accelerated} via Sequential Subspace Optimization Method~\cite{narkiss2005sequential}. However, the algorithm 
                            in~\cite{guminov2017accelerated} requires solving a three-dimensional convex problem in each iteration. One the other hand, the algorithm proposed in this work merely requires a line search step.
                          \end{remark}
                      \section{Accelerated gradient descent for function class $\ca{WQ}_{L, \gamma, \mu}$}
                          For $f \in \ca{WQ}_{L, \gamma, \mu}$, we have established in Proposition~\ref{prop:wq2w}, that
                          \begin{align*}
                            f \in \ca{W}_{L, \frac{1}{1/\gamma +a/(\mu \gamma)}, a}
                           \end{align*}
                          for every $a > 0$. Note that by Lemma~\ref{lemma:wq_gd_rate}, the best choice of $a$ for the convergence rate is $a = \mu$. For completeness, we write down explicitly the algorithm for function class $\ca{WQ}_{L, \gamma, \mu}$ in Algorithm~\ref{alg2}.
          \begin{algorithm}[H]
            \caption{Accelerated Gradient Descent for quasi-convex functions with quadratic growth}
            \label{alg2}
            \begin{algorithmic}[1]
              \State \text{Initialize } $x_0 \in \bb R^n$ and $\gamma_0 > 0$. \text{Set } $v_0 = x_0$.
              \If{ $k \ge 0$}
              \State \text{Compute $\alpha_k \in (0, 1)$ satisfying}
              \begin{align*}
          \frac{4L \alpha_k^2}{ \gamma^2} = (1-\alpha_k) \gamma_k +  \alpha_k \mu.
                \end{align*}
                \State Set: $\gamma_{k+1} = (1-\alpha_k)\gamma_k+\alpha_k \mu$.
                \State
                Choose $y_k = v_k + \beta_k(x_k- v_k)$ where
                \begin{align*}
                  \beta_k = \argmin_{\beta \in [0,1]} f(v_k + \beta_k(x_k - v_k)).
                  \end{align*}
                  \State Set: $x_{k+1} = y_k - \frac{1}{L} \nabla f(y_k)$.
                  \State Set:
                  \begin{align*}
                    v_{k+1} = \frac{1}{\gamma_{k+1}} \left[(1-\alpha_k) \gamma_k v_k + \alpha_k \mu y_k - \frac{2\alpha_k}{ \gamma} \nabla f(y_k)\right].
                    \end{align*}
                \EndIf
              \end{algorithmic}
            \end{algorithm}

                          It is straightforward to obtain the following convergence result.
                          \begin{theorem}
                          For the sequence $\{x_k\}$ generated by Algorithm~(\ref{alg2}) one has,
                            \begin{align*}
                              f(x_k) - f(x^*) \le \left( 1 - \frac{1}{2}\sqrt{\frac{\mu \gamma^2}{L}}\right)^k L\|x_k-x_0\|^2.
                              \end{align*}
                            \end{theorem}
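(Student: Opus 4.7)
The plan is to reduce the theorem to the already-established convergence result for Algorithm~\ref{alg1} on the class $\ca W_{L,\gamma',\mu}$ by exploiting the inclusion $\ca{WQ}_{L,\gamma,\mu} \subseteq \ca W_{L,\gamma',a}$ from Proposition~\ref{prop:wq2w}. Concretely, I would first set $a=\mu$ (the optimal choice highlighted in Lemma~\ref{lemma:wq_gd_rate}) and compute
\begin{align*}
\gamma' \;=\; \frac{1}{1/\gamma + a/(\mu\gamma)}\Bigg|_{a=\mu} \;=\; \frac{1}{1/\gamma + 1/\gamma} \;=\; \frac{\gamma}{2},
\end{align*}
so that $f \in \ca W_{L,\gamma/2,\mu}$ whenever $f \in \ca{WQ}_{L,\gamma,\mu}$.

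Second, I would verify that Algorithm~\ref{alg2} is nothing but Algorithm~\ref{alg1} applied with the parameter substitution $\gamma \mapsto \gamma/2$. Substituting $\gamma/2$ into the defining equation for $\alpha_k$ in Algorithm~\ref{alg1} yields $\frac{L\alpha_k^2}{(\gamma/2)^2} = (1-\alpha_k)\gamma_k + \alpha_k \mu$, which is exactly $\frac{4L\alpha_k^2}{\gamma^2} = (1-\alpha_k)\gamma_k + \alpha_k\mu$ as stated in Step 3 of Algorithm~\ref{alg2}. Similarly, the $v_{k+1}$-update of Algorithm~\ref{alg1} with $\gamma \mapsto \gamma/2$ gives the coefficient $\frac{2\alpha_k}{\gamma}$ on $\nabla f(y_k)$ that appears in Step 7 of Algorithm~\ref{alg2}. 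The $y_k$- and $x_{k+1}$-updates are unchanged. Hence the two algorithms are identical modulo the reparametrization.

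Third, I would invoke Theorem~\ref{thrm:wqc_convergence_rate} with the parameters $(\gamma/2,\mu)$ in place of $(\gamma,\mu)$; together with the choice $\gamma_0 = \max(L,2\mu/\gamma)$ from that theorem, this directly yields
\begin{align*}
f(x_k) - f(x^*) \;\le\; \min\!\left( \left(1 - \sqrt{\frac{\mu(\gamma/2)^2}{L}}\right)^{\!k},\; \frac{4}{(2+(\gamma/2)k)^2}\right) L\|x_0-x^*\|^2,
\end{align*}
and the first term in the minimum is exactly $\bigl(1 - \tfrac{1}{2}\sqrt{\mu\gamma^2/L}\bigr)^k$, which gives the desired bound.

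The work is almost entirely bookkeeping; the only subtle point, which I would flag rather than grind through, is checking that the line-search prescription in Step 5 of Algorithm~\ref{alg2} (pure minimization of $f$ along $[v_k,x_k]$) still enforces inequality~\eqref{eq:line_search_condition} after the reparametrization, since that inequality is what drives the weak-estimate-sequence argument underlying Theorem~\ref{thrm:wqc_convergence_rate}. This follows from Proposition~\ref{prop:line_search}, whose conclusion $\langle \nabla f(y_k), v_k-y_k\rangle \ge 0$ and $f(y_k)\le f(x_k)$ is independent of the $\gamma$-parameter and therefore transfers verbatim to the reduced problem in $\ca W_{L,\gamma/2,\mu}$.
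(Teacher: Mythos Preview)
Your proposal is correct and follows exactly the route the paper intends: the paper's ``proof'' is just the sentence ``It is straightforward to obtain the following convergence result'' after noting that Proposition~\ref{prop:wq2w} with $a=\mu$ gives $f\in\ca W_{L,\gamma/2,\mu}$ and that Algorithm~\ref{alg2} is Algorithm~\ref{alg1} with $\gamma\mapsto\gamma/2$, so Theorem~\ref{thrm:wqc_convergence_rate} applies verbatim. Your bookkeeping checks (the $4L\alpha_k^2/\gamma^2$ and $2\alpha_k/\gamma$ substitutions, and the observation that Proposition~\ref{prop:line_search} is $\gamma$-independent) are precisely what the paper leaves implicit, and your final bound with $\|x_0-x^*\|^2$ is the intended one --- the $\|x_k-x_0\|^2$ in the stated theorem is evidently a typo.
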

                          Guminov {\em et al.}~\cite{guminov2017accelerated} have
                          obtained the same convergence result for this function class via the Conjugate Gradient Method. Their proposed method requires solving a minimization problem over $k$-dimensional subspace at $k^{\text{th}}$ iteration. Moreover, the rate is obtained by applying a restart technique. Here, we only require to perform a line search at each iteration and the  convergence rate is also direct.
                                  \section{Geometric perspective: optimal quadratic averaging for $\ca W_{L, \gamma, \mu}$}
                                  Recently, a number of motivating perspectives on Nesterov's optimal methods have been proposed. In this section, we examine quadratic averaging~\cite{drusvyatskiy2018optimal}, a formulation with clear geometric intuition whose convergence rate matches Nesterov's optimal methods for smooth and strongly convex functions. We shall demonstrate that a slight variation on quadratic averaging would work for the function class $\ca W_{L, \gamma, \mu}$ when $\mu > 0$. Indeed, in~\cite{drusvyatskiy2018optimal}, a line search step is already inherent in the proposed algorithm; one only needs to slightly modify the quadratic underestimate of $\ca W_{L, \gamma, \mu}$. We refer to the paper~\cite{drusvyatskiy2018optimal} for a detailed motivation and only present the essential ideas for $\ca W_{L, \gamma, \mu}$. \par
                                  \subsection{Optimal Quadratic Averaging}
                                  At any point $\bar{x}$, note that the quadratic function given by
                                  \begin{align*}
                                    q(x; \bar{x}) &= f(\bar{x}) + \frac{1}{\gamma} \langle \nabla f(\bar{x}), x-\bar{x} \rangle + \frac{\mu}{2} \|x-\bar{x}\|^2 \\
                                    &= \left( f(\bar{x}) - \frac{1}{2 \mu \gamma^2} \|\nabla f(\bar{x})\|^2 \right) + \frac{\mu}{2} \|x-\bar{x}^{++}\|^2,
                                    \end{align*}
                                      with $\bar{x}^{++} = \bar{x} - \frac{1}{\mu \gamma} \nabla f(\bar{x})$, underestimates $f(x^*)$, i.e., $f(x^*) \ge q(x^*; \bar{x})$. 
                                      The results in~\cite{drusvyatskiy2018optimal} can be smoothly adpated for the function class $\ca W_{L, \gamma, \mu}$. We present the algorithm below.
          \begin{algorithm}[H]
            \caption{Optimal Quadratic Averaging for weakly-quasi-strongly-convex functions}
            \label{alg3}
            \begin{algorithmic}[1]
              \State Initialize $x_0 \in \bb R^n$ and $c_0 = x_0^{++}$.
              \State Set $Q_0(x) = m_0 + \frac{\mu}{2} \|x-c_0\|^2$ where $m_0 = f(x_0) - \frac{\|\nabla f(x_0)\|^2}{2 \mu \gamma^2}$ and $c_0 = x_0^{++}$.
              \If{ $k \ge 1$}
              \State Set: $x_k = {\bf line\_search}(c_{k-1}, x_{k-1}^{+})$.
                \State Set: $Q(x) = \left( f(x_k) - \frac{\|\nabla f(x_k)\|^2}{2 \mu \gamma^2}\right) + \frac{\mu}{2}\|x-x_k^{++}\|^2$.
                \State
                Let $Q_k(x) = m_k + \frac{\alpha}{2} \|x-c_k\|^2$ be the optimal averaging of $Q$ and $Q_{k-1}$.
                \EndIf
              \end{algorithmic}
            \end{algorithm}
                                      \begin{theorem}
                                        In Algorithm~\ref{alg3}, we have
                                        \begin{align*}
                                          f(x_k^+) - m_k \le \left( 1 - \sqrt{\frac{\mu \gamma^2}{L}}\right)^k \left( f(x_0^+) - m_0\right).
                                          \end{align*}
                                        \end{theorem}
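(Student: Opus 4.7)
The plan is to adapt the optimal quadratic averaging analysis of~\cite{drusvyatskiy2018optimal} to the weakly-quasi-strongly-convex setting, tracking the gap $\Delta_k := f(x_k^+) - m_k$ as a Lyapunov quantity that contracts by $1-\sqrt{\mu\gamma^2/L}$ per iteration.

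First I would establish the invariant $Q_k(x^*) \le f(x^*)$ for all $k\ge 0$ by induction. The base case follows from the canonical identity for $q(x;\bar x)$ displayed just before Algorithm~\ref{alg3}, evaluated at $\bar x = x_0$ and $x = x^*$, combined with the weak-quasi-strong-convex inequality. For the inductive step, the quadratic $Q$ formed at step 5 satisfies $Q(x^*)\le f(x^*)$ by the same identity at $\bar x = x_k$; the optimal averaging, being a convex combination of $Q$ and $Q_{k-1}$ plus a nonnegative term, therefore also lies below $f(x^*)$ at $x^*$. Consequently $m_k \le Q_k(x^*) \le f(x^*)$, so in particular $\Delta_k \ge 0$.

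Next I would derive the one-step contraction. The $L$-smoothness of $f$ yields $f(x_k^+) \le f(x_k) - \tfrac{1}{2L}\|\nabla f(x_k)\|^2$, while the standard identity $\lambda\|x-c_1\|^2 + (1-\lambda)\|x-c_2\|^2 = \|x-\bar c_\lambda\|^2 + \lambda(1-\lambda)\|c_1-c_2\|^2$ applied to $\lambda Q(x) + (1-\lambda)Q_{k-1}(x)$ produces the intercept
\[
m_\lambda \;=\; \lambda\Bigl(f(x_k) - \tfrac{\|\nabla f(x_k)\|^2}{2\mu\gamma^2}\Bigr) + (1-\lambda) m_{k-1} + \tfrac{\mu\lambda(1-\lambda)}{2}\|x_k^{++} - c_{k-1}\|^2.
\]
Since $m_k$ is the maximum of $m_\lambda$ over $\lambda\in[0,1]$, it suffices to exhibit a single value of $\lambda$ producing the target contraction. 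The line-search step supplies both $f(x_k) \le f(x_{k-1}^+)$ and $\langle \nabla f(x_k), c_{k-1}-x_k\rangle \ge 0$, the latter via the trichotomy on interior/endpoint minima along the segment $[c_{k-1}, x_{k-1}^+]$ used in the proof of Proposition~\ref{prop:line_search}.

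Expanding $x_k^{++} = x_k - \tfrac{1}{\mu\gamma}\nabla f(x_k)$ gives
\[
\|x_k^{++} - c_{k-1}\|^2 \;=\; \|x_k-c_{k-1}\|^2 + \tfrac{2}{\mu\gamma}\langle \nabla f(x_k), c_{k-1}-x_k\rangle + \tfrac{1}{\mu^2\gamma^2}\|\nabla f(x_k)\|^2,
\]
the first two summands being nonnegative by the line-search inequality. Substituting into $\Delta_k \le f(x_k) - \tfrac{1}{2L}\|\nabla f(x_k)\|^2 - m_\lambda$, using $f(x_k)\le f(x_{k-1}^+)$, and discarding the two nonnegative contributions above leaves a residue of the form $(1-\lambda)\Delta_{k-1} + \bigl(\tfrac{\lambda^2}{2\mu\gamma^2} - \tfrac{1}{2L}\bigr)\|\nabla f(x_k)\|^2$. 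The choice $\lambda = \sqrt{\mu\gamma^2/L}$ is precisely the one that zeros out the $\|\nabla f(x_k)\|^2$ coefficient, giving $\Delta_k \le (1-\sqrt{\mu\gamma^2/L})\Delta_{k-1}$, and iterating from $k$ down to $0$ produces the theorem. The main obstacle is this algebraic balancing: the weak-quasi-convexity constant $\gamma\in(0,1]$ enters both the linear term of $q(x;\bar x)$ and the shifted center $\bar x^{++}$, and one must verify that these $\gamma$-factors combine to make $\mu\gamma^2$ the effective strong-convexity modulus throughout, in agreement with the $\gamma=1$ computation of~\cite{drusvyatskiy2018optimal}.
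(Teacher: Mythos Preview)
Your proposal is correct and is precisely the adaptation the paper alludes to: the paper omits the proof entirely, simply stating that the argument in~\cite{drusvyatskiy2018optimal} ``can be extended,'' and your write-up carries out exactly that extension with the $\gamma$-modified lower model $q(x;\bar x)$ and shifted center $\bar x^{++}=\bar x-\tfrac{1}{\mu\gamma}\nabla f(\bar x)$. The key algebraic identity $\tfrac{\lambda^2}{2\mu\gamma^2}-\tfrac{1}{2L}=0$ at $\lambda=\sqrt{\mu\gamma^2/L}$ is the correct balance, and your use of the line-search trichotomy to secure $f(x_k)\le f(x_{k-1}^+)$ and $\langle\nabla f(x_k),c_{k-1}-x_k\rangle\ge 0$ mirrors Proposition~\ref{prop:line_search} as intended.
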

                                      The proof in~\cite{drusvyatskiy2018optimal} can be extended to prove the above theorem; we omit the details here.
                                  \section{Concluding remarks and an LQR-related observation}\label{LQR}
                                      As mentioned in the introduction, this work was partially motivated by adopting acceleration for direct policy updates for the linear quadratic regulator (LQR) problem; the reader is referred to ~\cite{bu2019LQR,fazel2018global}.
                                        In order to make this motivation more transparent, we conclude the paper with the following observation.

                                      \begin{lemma}
                                        For the LQR problem, the cost function (as directly parameterized in terms of the feedback gain) satisfies,
                                        \begin{align*}
                                          f(K_*) \ge f(K) + \|Y_*\|_2 \langle \nabla f(K), K-K_*\rangle + \frac{2 \lambda_1(R+B^{\top} X_K B)}{2} \|K-K_*\|_F^2.
                                          \end{align*}
                                          As such, the LQR cost as a function of the feedback gain is $\gamma$-weakly-quasi-convex.
                                        \end{lemma}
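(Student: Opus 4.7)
The plan is to derive this inequality directly from the LQR \emph{cost difference lemma} (see Fazel et al.~\cite{fazel2018global}, Lemma 10, or Bu--Mesbahi~\cite{bu2019LQR}). For any two stabilizing gains $K$ and $K'$, the identity reads
\begin{align*}
f(K') - f(K) = 2\Tr\!\left(Y_{K'}(K'-K)^\top E_K\right) + \Tr\!\left(Y_{K'}(K'-K)^\top (R+B^\top X_K B)(K'-K)\right),
\end{align*}
where $E_K := (R+B^\top X_K B)K - B^\top X_K A$ is the Riccati residual, $Y_{K'}$ is the closed-loop state covariance under policy $K'$, and the cost gradient admits the form $\nabla f(K) = 2 E_K Y_K$. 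Specializing to $K' = K_*$ produces an exact decomposition of $f(K_*) - f(K)$ into a cross term that is linear in $E_K$ plus a non-negative quadratic form in $K_* - K$.

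For the quadratic form, I would apply cyclicity of trace together with the PSD inequalities $(R+B^\top X_K B) \succeq \lambda_1(R+B^\top X_K B)\,I$ and $Y_* \succeq 0$ to lower-bound the term by (a multiple of) $\lambda_1(R+B^\top X_K B)\,\|K - K_*\|_F^2$. For the cross term, I would substitute $2 E_K = \nabla f(K)\, Y_K^{-1}$ and then use the operator inequality $Y_* \preceq \|Y_*\|_2\, I$ together with cyclic trace manipulations to extract $\|Y_*\|_2$ as a scalar factor, arriving at $\|Y_*\|_2 \langle \nabla f(K), K_* - K\rangle$. Summing the two bounds then yields the claimed inequality.

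The $\gamma$-weakly-quasi-convex conclusion is then immediate: the quadratic correction is non-negative, so after rearrangement one has $\|Y_*\|_2 \langle \nabla f(K), K - K_*\rangle \ge f(K) - f(K_*)$, i.e.\ $f$ is $\gamma$-weakly-quasi-convex with $\gamma = 1/\|Y_*\|_2 > 0$. I expect the cross-term bound to be the main technical obstacle, since $(K_*-K)^\top E_K$ is not sign-definite and hence the naive PSD trace inequality $\Tr(Y_* M) \le \|Y_*\|_2 \Tr(M)$ does not apply directly; a careful cyclic reordering (or an appeal to a von~Neumann-type trace inequality, exploiting that $Y_*$ and $Y_K$ are both symmetric PSD) will be needed to carry this step through cleanly.
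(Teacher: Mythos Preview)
The paper does not actually supply a proof of this lemma; it is stated as a concluding observation, and the manuscript proceeds directly to the acknowledgments. There is therefore no argument in the paper against which your proposal can be compared.

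On its own merits, your plan is the standard one and is essentially what the cited references~\cite{fazel2018global,bu2019LQR} do: invoke the cost--difference identity, specialize to $K'=K_*$, and bound the two resulting terms separately. The quadratic term is unproblematic. You have, however, correctly flagged the genuine obstacle: the cross term is $\Tr\!\bigl(Y_*(K_*-K)^\top E_K\bigr)$, while the Frobenius inner product $\langle \nabla f(K),K_*-K\rangle$ unpacks to $2\,\Tr\!\bigl(Y_K(K_*-K)^\top E_K\bigr)$, so the mismatch is between $Y_*$ and $Y_K$, not merely between $Y_*$ and a scalar. Your proposed substitution $2E_K=\nabla f(K)Y_K^{-1}$ followed by $Y_*\preceq\|Y_*\|_2 I$ and a cyclic/von~Neumann trick will not by itself produce the bare factor $\|Y_*\|_2$ without a residual $Y_K^{-1}$; one typically needs either a normalization on the initial state covariance or a coefficient of the form $\|Y_*Y_K^{-1}\|_2$ (equivalently $\|Y_*\|_2/\sigma_{\min}(Y_K)$). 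I would advise checking~\cite{bu2019LQR} for the precise constant and setup before committing to the bound as literally stated.

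A minor slip: your final rearrangement has a sign error. Dropping the nonnegative quadratic from the displayed inequality yields $f(K)-f(K_*)\le \|Y_*\|_2\,\langle\nabla f(K),K_*-K\rangle$, not $\|Y_*\|_2\,\langle\nabla f(K),K-K_*\rangle\ge f(K)-f(K_*)$. As written, the lemma's inner product carries $K-K_*$, so either the intended coefficient is $1/\|Y_*\|_2$ (giving $\gamma=1/\|Y_*\|_2$ directly) or the sign in the displayed inequality is transposed; this is almost certainly a typographical artifact of the paper rather than a flaw in your strategy.
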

                                        
\section*{Acknowledgments}
The authors thank Maryam Fazel for discussions related to the topic of this work, and in particular pointing out the work~\cite{hinder2019}.                                
\bibliographystyle{alpha}
\bibliography{ref}

\begin{thebibliography}{BMFM19}

\bibitem[BMFM19]{bu2019LQR}
Jingjing Bu, Afshin Mesbahi, Maryam Fazel, and Mehran Mesbahi.
\newblock {LQR} through the lens of first order methods: Discrete-time case.
\newblock {\em arXiv preprint arXiv:1907.08921}, 2019.

\bibitem[DFR18]{drusvyatskiy2018optimal}
Dmitriy Drusvyatskiy, Maryam Fazel, and Scott Roy.
\newblock An optimal first order method based on optimal quadratic averaging.
\newblock {\em SIAM Journal on Optimization}, 28(1):251--271, 2018.

\bibitem[FGKM18]{fazel2018global}
Maryam Fazel, Rong Ge, Sham Kakade, and Mehran Mesbahi.
\newblock Global convergence of policy gradient methods for the linear
  quadratic regulator.
\newblock In {\em Proceedings of the 35th International Conference on Machine
  Learning}, pages 1467--1476, 2018.

\bibitem[GG17]{guminov2017accelerated}
Sergey Guminov and Alexander Gasnikov.
\newblock Accelerated methods for $\alpha$-weakly-quasi-convex problems.
\newblock {\em arXiv preprint arXiv:1710.00797}, 2017.

\bibitem[HMR16]{hardt2016gradient}
Moritz Hardt, Tengyu Ma, and Benjamin Recht.
\newblock Gradient descent learns linear dynamical systems.
\newblock {\em arXiv preprint arXiv:1609.05191}, 2016.

\bibitem[HSS19]{hinder2019}
Oliver Hinder, Aaron Sidford, and Nimit~S. Sohoni.
\newblock Near-optimal methods for minimizing star-convex functions and beyond.
\newblock {\em arXiv preprint arXiv:1906.11985}, 2019.

\bibitem[KNS16]{karimi2016linear}
Hamed Karimi, Julie Nutini, and Mark Schmidt.
\newblock Linear convergence of gradient and proximal-gradient methods under
  the polyak-{\l}ojasiewicz condition.
\newblock In {\em Joint European Conference on Machine Learning and Knowledge
  Discovery in Databases}, pages 795--811. Springer, 2016.

\bibitem[Nes83]{nesterov1983method}
Yurii Nesterov.
\newblock A method for solving the convex programming problem with convergence
  rate $o(1/k^2)$.
\newblock In {\em Dokl. akad. nauk Sssr}, volume 269, pages 543--547, 1983.

\bibitem[Nes04]{nesterov2013introductory}
Yurii Nesterov.
\newblock {\em Introductory {L}ectures on {C}onvex {O}ptimization: A {B}asic
  {C}ourse}.
\newblock Springer Science \& Business Media, 2004.

\bibitem[NGGD19]{nesterov2019primal}
Yurii Nesterov, Alexander Gasnikov, Sergey Guminov, and Pavel Dvurechensky.
\newblock Primal-dual accelerated gradient methods with small-dimensional
  relaxation oracle.
\newblock {\em arXiv preprint arXiv:1809.05895}, 2019.

\bibitem[NZ05]{narkiss2005sequential}
Guy Narkiss and Michael Zibulevsky.
\newblock {\em Sequential {S}ubspace {O}ptimization {M}ethod for {L}arge-scale
  {U}nconstrained {P}roblems}.
\newblock Technion-IIT, Department of Electrical Engineering, 2005.

\bibitem[Pol63]{polyak1963gradient}
Boris~T Polyak.
\newblock Gradient methods for the minimisation of functionals.
\newblock {\em USSR Computational Mathematics and Mathematical Physics},
  3(4):864--878, 1963.

\end{thebibliography}
\end{document}